\renewcommand\footnotemark{}
\titleformat{\subsubsection}[runin]
{\normalsize\bfseries\itshape\sffamily}{{\normalfont\bfseries\itshape\S}\thesubsubsection.}{0.5em}{}[.\hspace*{0.5ex}]
\titleformat{\paragraph}[runin]
{\normalsize\bfseries\itshape\sffamily}{{\normalfont\bfseries\itshape\S}\theparagraph.}{0.5em}{}[.\hspace*{0.5ex}]
\theoremstyle{definition}
\newtheorem{defi}{\sffamily Definition}[section]
\newtheorem{ex}[defi]{\sffamily Example}
\theoremstyle{plain}
\newtheorem{theorem}[defi]{\sffamily Theorem}
\newtheorem{prop}[defi]{\sffamily Proposition}
\newtheorem{lemma}[defi]{\sffamily Lemma}
\theoremstyle{remark}
\newtheorem*{remark}{\sffamily Remark}
\providecommand{\proofnamestyle}{\itshape\sffamily\bfseries}
\renewenvironment{proof}[1][\proofname]{\par
	\pushQED{\qed}%
	\normalfont \topsep6\p@\@plus6\p@\relax
	\trivlist
	\item\relax
	{\proofnamestyle
		#1\@addpunct{.}}\hspace\labelsep\ignorespaces
}{%
	\par \popQED\endtrivlist\@endpefalse
}
\numberwithin{equation}{section}
\newcommand*{\wt}[1]{\widetilde{#1}}
\newcommand*{\wh}[1]{\widehat{#1}}
\newcommand*{\R}{\mathbb{R}}
\newcommand*{\I}{\mathbbm{1}}
\providecommand\given{} 
\newcommand\SetSymbol[1][]{\nonscript\:#1\vert \allowbreak \nonscript\: \mathopen{}}
\DeclarePairedDelimiterX\Set[1]\{\}{ \renewcommand\given{\SetSymbol[\delimsize]} #1 }
\DeclareMathOperator*{\sgn}{\operatorname{sgn}}
\DeclareMathOperator*{\spt}{\operatorname{spt}}
\DeclareMathOperator*{\Curl}{\operatorname{curl}}
\newcommand*{\Dt}{\mathop{}\!\textbf{D}_t}
\newcommand*{\Dbt}{\mathop{}\!\mathcal{D}_\beta}
\newcommand*{\pd}{\partial}
\newcommand*{\Z}{\mathbb{Z}}
\newcommand*{\C}{\mathbb{C}}
\newcommand*{\cR}{c_{\text{R}}}
\newcommand*{\cI}{c_{\text{I}}}
\newcommand*{\vv}{{\vb{v}}}
\newcommand*{\cU}{\mathcal{U}}
\newcommand*{\cUs}{\mathcal{U}_*}
\newcommand*{\Gmt}{\Gamma_t}
\newcommand*{\Gms}{\Gamma_*}
\newcommand*{\vn}{{\vb{n}}}
\newcommand*{\cV}{\mathcal{V}}
\newcommand*{\Rin}{R_{\text{in}}}
\newcommand*{\Rout}{R_{\text{out}}}
\newcommand*{\er}{{\vu{e}_r}}
\newcommand*{\etheta}{{\vu{e}_\theta}}
\newcommand*{\vV}{\vb{V}}
\newcommand*{\vbu}{{\vb{u}}}
\newcommand*{\vpsi}{{\vb*{\uppsi}}}
\newcommand*{\vtau}{{\vb*{\uptau}}}
\newcommand*{\vXi}{{\vb*{\Upxi}}}
\newcommand*{\vdelta}{\vb*{\updelta}}
\newcommand*{\ak}{\abs{k}}
\begin{document}
	\title{%
		\LARGE \bfseries \sffamily
		Interactions between Wind and Water Waves near Circular Flows%
		\footnote{{\bf\sffamily Date: }\today.}
		\footnote{{\bf\sffamily MSC(2020): }Primary 35Q35; 76E07.}%
		\footnote{{\bf\sffamily Keywords: }vortex sheets, instability, circular flows, Rayleigh's equations.}%
	}
	\author{%
		\scshape Changfeng GUI
		\thanks{%
			\textbf{\sffamily Changfeng GUI: } Department of Mathematics, Faculty of Science and Technology, University of Macau, Taipa, Macao SAR, China.
			\textit{E-mail}: {\color{BurntOrange} \ttfamily changfenggui@um.edu.mo}
		}
		\and
		\scshape Sicheng LIU
		\thanks{%
			\textbf{\sffamily Sicheng LIU: } Department of Mathematics, Faculty of Science and Technology, University of Macau, Taipa, Macao SAR, China.
			\textit{E-mail}: {\color{BurntOrange} \texttt{scliu@link.cuhk.edu.hk}}
		}
	}
	\date{} 
	\maketitle
	\begin{abstract}  
		This manuscript concerns the dynamical interactions between wind and water waves, which are characterized through two-phase free interface problems for the Euler equations. We provide a comprehensive derivation on the linearized problems of general two-phase flows. Then, we study the instability issues of perturbing waves around circular steady solutions, and we demonstrate a semi-circle result on the possible locations of unstable modes. We also present necessary conditions and sufficient ones for the instability of wind-perturbing water waves near Taylor-Couette flows.
	\end{abstract}

	{\sffamily\tableofcontents}
	
	\addtocontents{toc}{\protect\setcounter{tocdepth}{2}}

	\section{Introduction}
	We consider the interactions between wind and water waves. If the atmosphere occupies a sufficiently large domain, the compression of air is sometimes negligible. Specifically, when one focuses on the interactions between wind and water near the interface separating them, it is natural to assume that the dynamics can be characterized through the two-phase free interface problems for incompressible Euler equations
	\begin{equation}\label{euler}
		\begin{cases*}\displaystyle
		\varrho_{\pm}\qty\big[\pd_t \vv_{\pm} + (\vv_{\pm}\vdot\grad)\vv_{\pm}]+\grad p_{\pm} = \vb{0} &in $\cU_t^{\pm} $,\\
			\div\vv_{\pm} = 0 &in $\cU_t^\pm$,
		\end{cases*}
	\end{equation}
	together with boundary conditions
	\begin{equation}\label{bc}
		\begin{cases*}
			p_+ - p_- = \alpha\varkappa &on $\Gmt$,\\
			\vv_+ \vdot \vn = \vv_- \vdot \vn = \cV &on $\Gmt$,
		\end{cases*}
	\end{equation}
	where $\varrho_\pm, \vv_\pm, p_\pm$ are respectively the density, velocity field, and pressure of the two fluids, $\cU_t^\pm$ are the moving fluid domains, $\Gmt$ is the free interface separating two fluids, $\vn$ is the unit normal vector field of $\Gmt$ (which is assumed to be the outer normal of $\Gmt \subset \pd\cU_t^+$), $\cV$ is the normal speed of $\Gmt$ in the direction of $\vn$, $\varkappa$ is the mean curvature of $\Gmt$ with respect to $\vn$, and $\alpha \ge 0$ is a constant measuring the surface tension.
	
	The first equation in \eqref{euler} describes the conservation of momentum of two fluids, the second one is the incompressibility condition. The first boundary condition in \eqref{bc} follows from the balance of momenta across the free interface, and the second one indicates that the free interface evolves with the fluids. One can refer to \cite[Ch. 7]{LL87book} for more detailed derivations and discussions on the physical backgrounds of interfacial waves.
	
	For the sake of simplicity, we restrict our attention to the 2D problems.
	
	\subsection{Backgrounds and Related Works}
	
	The mathematical study of wind–water interactions, which are formulated as two-phase free interface problems in hydrodynamics, can reveal how the profiles of wind and water facilitate energy transfer and trigger wave instabilities. Wind, as a driving force, introduces minute perturbations on the water surface tangentially.  Under suitable conditions, such as marked differences in velocity, these disturbances can amplify into observable waves, which is the well-known Kelvin–Helmholtz instability. These problems not only characterize the fundamental generation and evolution process of ocean surface waves but also provide a robust theoretical basis for forecasting extreme weather events, designing marine structures, and harnessing wave energy. The mathematical results quantitatively capture these complex processes, offering insights into how wind profiles, surface tension, and fluid density influence wave developments and break-up. In essence, the study of wind–water interfaces deepens our understanding of transfer mechanisms of natural energy, fueling both the exploration of fundamental physical processes and the advancement of innovative engineering solutions.
	
	Local well-posedness of free interface problems \eqref{euler}-\eqref{bc} with $\alpha > 0$ (i.e., capillary vortex sheets) in all space dimensions has been established in standard Sobolev spaces, one may refer to \cite{CCS08CPAM} and \cite{SZ11ARMA} for the results using different approaches. In the absence of surface tension, the vortex sheet problems are ill-posed for the linearized issues (cf. \cite{BHL93CPAM}) and the nonlinear scenarios (see \cite{Ebin88CPDE}).
	
	As particular cases, there is a large class of exact (steady) solutions to \eqref{euler}-\eqref{bc} taking the form in Cartesian coordinates:
	\begin{equation*}
		\vv(t, x) = \qty(v^1(x_2), 0)^T \qc \Gmt = \{x_2 = 0\},
	\end{equation*}
	which are called \emph{shear flows}. Such models characterize the simplest cases for air-ocean interactions. If the ocean is assumed to be quiescent and the air velocity is uniform, this becomes the classical Kelvin-Helmholtz model, which is linearly unstable if $\alpha = 0$ and the air velocity is non-zero. More detailed discussions on the Kelvin-Helmholtz instability can be found in \cite{DR04book}. For more general wind profiles (still under the shear flow settings), {\sc Miles} studied the wind generated capillary gravity water waves in a series of works  \cite{Miles57JFM,Miles59JFM,Miles59JFM'}, revealing the phenomena that instabilities can be caused by some particular wind profiles, under the hypothesis that the density of air is sufficiently small. Rigorous mathematical justifications of {\sc Miles'} criterion were demonstrated by {\sc B\"{u}hler, Shatah, Walsh, and Zeng} in \cite{BuhlerSWZ2016}, which also includes very detailed surveys on the air-ocean interface problems. A very recent research by {\sc Liu} \cite{Liu2024} addresses careful analysis on the spectrum for the linearized two-phase problems at the shear flows. For more discussions on the stability and instability of surface waves around shear flows, see \cite{HL08CMP,HL08err,RR13JFM,BR13Angew,LZ21arxiv,LZ25CMP}.
	
	However, solutions close to shear flows can only describe surface waves for the graph-type free interfaces. In many realistic scenarios, the free interface cannot be fully represented by a graph (e.g., liquid drops and water columns). Thus, it would be natural to study the free boundary problems without graph assumptions. Although the local well-posedness is available, the stability analysis is still in its infancy. Unlike the shear flows with flat interface, to the extent of our knowledge, the stability or instability analysis for free boundary problems with non-flat interface is still open. This motivates us to consider surface waves around the circular flows in an annular region or a disk. Concerning the stability for fixed-boundary Euler equations around circular flows, one can refer to \cite{Zil17JDE}, the survey \cite{Gal19arxiv}, and the references therein.

	\subsection{Summary of Results}
	
	\subsubsection{Linearized problems}
	
	In \S\ref{sec lin eqn}, we derive the linearized systems for the two-phase free interface problems in a rigorous manner. Particularly, the interface is not presumed to be a graph, and the background velocity can be an arbitrary solenoidal vector fields satisfying \eqref{bc}. The main issue is to linearize the boundary conditions \eqref{bc} on the free interface. To the extent of our knowledge, this is the first rigorous characterization on the linearized problems for general two-phase flows. (One can refer to \cite{BuhlerSWZ2016} for the results under graph assumptions through a different approach.) Particularly, the jump condition on the linearized velocity fields are intrinsically contained in the evolution equation for the linearized normal speed. Moreover, our arguments can be easily extended to higher dimensional scenarios.
	
	\subsubsection{Perturbations of circular flows}\label{sec circular background}
	After \S\ref{sec lin eqn}, we mainly focus on the instability issues of water waves around circular flows. In terms of polar coordinates $(r, \theta)$, one may express a vector in the form
	\begin{equation}
		\vv = v^r\er + v^\theta\etheta.
	\end{equation}
	We consider the axisymmetric circular background flows, i.e.,
	\begin{equation}\label{def V}
		\vV_\pm = rW_\pm(r) \etheta,
	\end{equation}
	where $W_\pm(r)$ represent the angular velocities. Particularly, the vorticities can be expressed as
	\begin{equation}\label{def Omega}
		\Omega_\pm = \Curl \vV_\pm = \frac{1}{r}\qty[\pd_r\qty(rV_\pm^\theta) - \pd_\theta V_\pm^r] = \frac{\pd_r \qty(r^2 W_\pm)}{r} = 2W_\pm + r\pd_rW_\pm.
	\end{equation} 
	Consider the perturbation of the stationary flows
	\begin{equation}
		\begin{cases*}
			\varrho_\pm (\vV_\pm \vdot \grad)\vV_\pm + \grad P_{\pm} = \vb{0} &in $\cUs^\pm$, \\
			\div \vV_\pm = 0 &in $\cUs^\pm$,
		\end{cases*}
	\end{equation}
	for which (here $\Rin$ and $\Rout$ are two constants).
	\begin{equation}\label{def Gmt}
		\cUs^+ = \{\Rin < r < 1\} \qc \cUs^- = \{ 1 < r < \Rout \}, \qand \Gms = \qty{r = 1}.
	\end{equation}
	Assume further that on the fixed boundaries, there hold the slip boundary conditions:
	\begin{equation}
		\vV_+ \vdot \vn_{\text{in}} = 0 \text{ on } \{r = \Rin\} \qand \vV_- \vdot \vn_{\text{out}} = 0 \text{ on } \{r = \Rout\}.
	\end{equation}
	We also consider the scenario where the inner fluid region is the unit disk, i.e.
	\begin{equation}
		\cUs^+ = \{r < 1\},
	\end{equation}
	in which case, there would be no boundary condition imposed on the origin.
	
	In \S\ref{sec lin cir}, we derive the equations and dispersive relations for perturbing waves around circular background flows. After that, we analyze the (in-)stability issues for some typical models in \S\ref{sec ex}, which, particularly, indicate the stabilization effect of capillary forces. In \S\ref{sec semi-circle}, we demonstrate a semi-circle type result on the location of possible unstable phase velocities in the style of {\sc Howard} \cite{Howard1961} (see also \cite{Liu2024}), which only depend on the extreme values of background angular velocities.
	
	\subsubsection{Instability near Taylor-Couette type water flows}
	In \S\S\ref{sec TC flow}-\ref{sec CL}, under the inspiration of \cite{BuhlerSWZ2016}, we study the instability issues of Taylor-Couette water flows perturbed by circular wind with small densities. More precisely, we first consider the limiting water-vacuum free interface problems around Taylor-Couette-type background water flows, and we assume that for a fixed wave number, the perturbing wave is linearly stable with two distinct real phase velocities. We then demonstrate that, if the wind profile is sufficiently regular, the only possible unstable mode can only bifurcate from the critical layers (i.e., locations at which the wind takes the value of the prescribed phase velocity) for the wind, which can be regarded as a necessary condition for the wind-generated instability. Conversely, if at least one of these two phase velocities is a regular value of the angular velocity of the wind profile, then, under some additional assumptions on the sign of the derivative of wind vorticity at critical layers, the wind-perturbed water wave would be linearly unstable, which is a sufficient condition for the wind-generated instability. In \S\ref{sec other instability}, we provide an example existing instabilities for Lipschitz wind profiles, for which the critical layer and the support of the derivative of wind vorticity are disjoint with a positive bound, indicating that irregular wind profiles can lead to different instability mechanisms.
	
	\section{Linear Problems}
	
	\subsection{Linearization of the Free Interface Problems}\label{sec lin eqn}
	In this section, we consider the linearization of two-phase free interface problems \eqref{euler}-\eqref{bc}. Here we remark that the derivations in \S\ref{sec lin eqn} do not depend on the formulations of background solutions. 
	
	Denote by $\grad$ the covariant derivative in $\R^2$. Suppose that $\varrho_{\pm}$ and $\alpha$ are generic constants, and $(\vv_\pm, p_\pm, \cU_t^\pm)_\beta$ are a family of solutions to \eqref{euler}-\eqref{bc} parameterized by $\beta$. Let
	\begin{equation}
		\vbu \coloneqq (\pd_\beta\vv)_{\restriction \beta = 0} \qand q \coloneqq (\pd_\beta p)_{\restriction \beta = 0}
	\end{equation}
	be the linearized variables of $\vv$ and $p$, respectively. Then, taking variational derivatives of \eqref{euler} with respect to $\beta$ and evaluate at $\beta = 0$ yield the following linear equations for $(\vbu_\pm, q_\pm)$:
	\begin{equation}\label{lin eqn}
		\begin{cases*}
			\varrho_{\pm} \qty(\pd_t \vbu_\pm + \grad_{\vv_\pm}\vbu_\pm + \grad_{\vbu_\pm}\vv_\pm) + \grad q_{\pm} = \vb{0} &in $\cU_{t}^\pm$, \\
			\div \vbu_\pm = 0 &in $\cU_{t}^\pm$,
		\end{cases*}
	\end{equation}
	where $(\vbu_\pm, q_\pm)$ are interpreted as functions defined in $\cU_{t}^\pm$ for each fixed time moment.
	
	Next, we linearize the boundary conditions \eqref{bc}. Let $\vpsi$ be the variational velocity field of $\Gamma_{t, \beta}$ with respect to the parameter $\beta$, which means that $\vpsi$ is a vector field defined on $\Gamma_{t, \beta}$. For the simplicity of notations, we denote by 
	\begin{equation}
		\Dt \coloneqq \pd_t + \grad_\vv
	\end{equation}
	the material derivative along the fluid particle path, and
	\begin{equation}
		\Dbt \coloneqq \pd_\beta + \grad_{\vpsi}
	\end{equation}
	the material derivative along the variational trajectory. Furthermore, let $(\vtau, \vn)$ be the frame of $\Gmt$, where $\vtau$ is the unit tangent filed and $\vn$ is the unit outer normal of $\Gmt\subset\cU_t^+$. The orientation is prescribed so that $\vtau$ is the counterclockwise rotation of $\vn$ with angle $\pi/2$ (i.e., for the unit circle, there holds $\vtau = \etheta$ and $\vn=\er$). Then, it is clear that
	\begin{equation}\label{def ka}
		\grad_\vtau\vtau = - \varkappa\vn \qand \grad_\vtau\vn = \varkappa\vtau.
	\end{equation}
	Due to the unit length of $\vtau$, one can calculate from coordinate expressions that (cf. \cite[\S3.1]{Liu-Xin2023})
	\begin{equation}\label{Dt tau n}
		\Dt\vtau = (\grad_\vtau\vv \vdot\vn)\vn \qand \Dt\vn = - (\grad_\vtau\vv\vdot\vn)\vtau.
	\end{equation}
	Particularly, it follows that
	\begin{equation}\label{Dt ka}
		\Dt\varkappa = \Dt(\grad_\vtau\vn\vdot\vn) = -\vn\vdot(\grad_\vtau\grad_\vtau\vv) - 2\varkappa(\grad_\vtau\vv\vdot\vtau).
	\end{equation}
	It is obvious that the formulae \eqref{Dt tau n}-\eqref{Dt ka} also hold for $\Dbt$ with $\vv$ replaced by $\vpsi$, as they are purely geometrical relations. For the sake of convenience, we denote by
	\begin{equation}\label{def psi}
		\vpsi = \psi^\top \vtau + \psi^\perp\vn,
	\end{equation}
	where $\psi^\top$ and $\psi^\perp$ are both functions defined on $\Gamma_{t, \beta}$.
	
	The first boundary relation in \eqref{bc} reads that
	\begin{equation*}
		p_+ - p_- = \alpha\varkappa \qq{on} \Gmt.
	\end{equation*}
	Taking the material derivative with respect to $\beta$ implies that
	\begin{equation*}
		\Dbt(p_+ - p_-) \equiv \alpha \Dbt\varkappa \qq{on} \Gamma_{t, \beta},
	\end{equation*}
	which yields
	\begin{equation}
		(q_+ - q_-) + \psi^\perp (\grad_\vn p_+ - \grad_\vn p_-) + \psi^\top \alpha \grad_\vtau\varkappa = -\alpha \qty[\vn\vdot(\grad_\vtau\grad_\vtau\vpsi) + 2\varkappa(\grad_\vtau\vpsi\vdot\vtau)].
	\end{equation}
	It follows from \eqref{def ka} and \eqref{def psi} that
	\begin{equation}
		q_+ - q_- = -\alpha\grad_\vtau\grad_\vtau\psi^\perp - \qty[\alpha\varkappa^2 + (\grad_\vn p_+ - \grad_\vn p_-)]\psi^\perp  \qq{on} \Gmt.
	\end{equation}
	
	The second boundary condition in \eqref{bc} means that the fluid particles lying on the free interface will never leave. Taking a $C^2$ defining function $\Psi$ of $\Gamma_{t, \beta}$, for which
	\begin{equation*}
		\Gamma_{t, \beta} = \qty{\Psi(t, \beta, x) = 0} \qand \det(\grad_x \Psi) > 0 \text{ in a neighborhood of } \Gamma_{t, \beta}.
	\end{equation*}
	Then, it follows that
	\begin{equation}
		\Dt\Psi \equiv 0 \qand \Dbt\Psi \equiv 0 \qq{on} \Gamma_{t, \beta}.
	\end{equation}
	Through extending $\vpsi$ into a neighborhood of $\Gamma_{t, \beta}$ in an appropriate sense, it is legitimate to calculate that
	\begin{equation*}
		\comm{\Dt}{\Dbt} \coloneqq \Dt\Dbt - \Dbt\Dt = \grad_{\Dt\vpsi-\Dbt\vv}.
	\end{equation*}
	On the other hand, it is obvious that
	\begin{equation*}
		\comm{\Dt}{\Dbt}\Psi = \grad_{\Dt\vpsi-\Dbt\vv} \Psi = 0 \qq{on} \Gamma_{t, \beta},
	\end{equation*}
	which implies
	\begin{equation*}
		\qty(\Dt\vpsi-\Dbt\vv) \vdot \vn = 0 \qq{on} \Gamma_{t, \beta}.
	\end{equation*}
	Particularly, there holds
	\begin{equation}\label{Dt psi}
		\Dt\psi^\perp = \vbu\vdot\vn + \psi^\perp (\grad_\vn \vv \vdot \vn) \qq{on} \Gamma_{t}.
	\end{equation}
	Indeed, \eqref{Dt psi} implies
	\begin{equation*}
		\grad_{\vv_+ - \vv_-}\psi^\perp = (\vbu_+ - \vbu_-) \vdot \vn + \psi^\perp (\grad_\vn\vv_+ \vdot\vn - \grad_\vn \vv_- \vdot\vn).
	\end{equation*}
	Since $\div\vv_\pm = 0$, it holds that
	\begin{equation*}
		0 = \div\vv_\pm = \grad_\vtau\vv_\pm \vdot \vtau + \grad_\vn \vv_\pm \vdot \vn,
	\end{equation*}
	which yields
	\begin{equation}\label{jump u}
		(\vbu_+ - \vbu_-)\vdot\vn = \grad_{\vv_+ - \vv_-}\psi^\perp + \psi^{\perp}\vtau\vdot\grad_\vtau(\vv_+ - \vv_-).
	\end{equation}
	Specifically, when $\psi^\perp \neq 0$ and $(\vv_+ - \vv_-)\vdot\vn \neq 0$, the linearized velocity fields $\vbu_\pm$ would have jumps in the normal direction among the free interface.
	
	On the other hand, it follows from \eqref{bc}\textsubscript{2} that
	\begin{equation*}
		(\vv_+ - \vv_-) \vdot \vn = 0 \qq{on} \Gmt,
	\end{equation*}
	which, after applying $\Dbt$, leads to
	\begin{equation*}
		(\vbu_+ - \vbu_-)\vdot\vn + \grad_{\vpsi}(\vv_+ - \vv_-) \vdot\vn + (\vv_+ - \vv_-)\vdot\Dbt\vn = 0.
	\end{equation*}
	Thus, \eqref{Dt tau n} and \eqref{def psi} imply that
	\begin{equation*}
		(\vbu_+ - \vbu_-)\vdot\vn = \grad_\vtau\qty[\psi^\perp \vtau\vdot(\vv_+ - \vv_-)] \qq{on} \Gamma_{t},
	\end{equation*}
	which coincide with the previously derived jump condition \eqref{jump u}.
	
	In summary, the boundary conditions for the linearized problems for \eqref{euler}-\eqref{bc} can be written as
	\begin{equation}\label{lin bc}
		\begin{cases*}
			q_+ - q_- = -\alpha\grad_\vtau\grad_\vtau\psi^\perp - \qty[\alpha\varkappa^2 + (\grad_\vn p_+ - \grad_\vn p_-)]\psi^\perp &on $\Gamma_{t}$, \\
			\pd_t\psi^\perp + \grad_{\vv_\pm} \psi^\perp = \vbu_\pm \vdot \vn + (\grad_\vn\vv_\pm\vdot\vn)\psi^\perp &on $\Gamma_{t}$.
		\end{cases*}
	\end{equation}
	
	\subsection{Linear Evolutions around Circular Flows}\label{sec lin cir}
	From now on, we mainly focus on the eigenvalues and eigenfunctions of the linearized operator among the background flows introduced in \S\ref{sec circular background}. Namely, we consider the perturbations of circular flows with profile (recall that the stationary free interface is assumed to be the unit circle):
	\begin{equation}\label{ptb profile}
		\vbu(t, r, \theta) = \underline{\vbu}(r) e^{\lambda t} e^{ik\theta} \qc q(t, r, \theta) = \underline{q}(r) e^{\lambda t} e^{ik\theta}, \qand \psi^\perp(t, \theta) = \underline{\psi} e^{\lambda t} e^{ik\theta},
	\end{equation}
	where $k \in \Z\setminus\{0\}$, $\lambda\in\C$, and $\underline{\psi} \in \C\setminus\{0\}$ are constants. Here $\underline{\psi}$ is assumed to be non-trivial since we are considering surface waves.
	
	Plugging the profiles \eqref{def V}, \eqref{def Gmt}, and \eqref{ptb profile} into the linear equations \eqref{lin eqn}, it follows that
	\begin{equation}\label{lin eqn u}
		\begin{cases*}
			\lambda u^r - W u^\theta + ikW u^r - Wu^\theta + \varrho^{-1}\pd_r q = 0, \\
			\lambda u^\theta + u^r \pd_r \qty(rW) + ikW u^\theta + Wu^r + \varrho^{-1} r^{-1} ik q = 0, \\
			\pd_r\qty(r u^r) + ik u^\theta = 0.
		\end{cases*}
	\end{equation}
	Particularly, one can express $u^\theta$ in terms of $u^r$ through \eqref{lin eqn u}\textsubscript{3} and arrive at the equation involving only $u^r$:
	\begin{equation}
		i k^{-1}(\lambda+ikW)\pd_r\qty\big[r\pd_r\qty(ru^r)] + \qty\big[-ik(\lambda+ikW) + r\pd_r(2W+r\pd_rW)] u^r = 0.
	\end{equation}
	Denote by
	\begin{equation}
		c \coloneqq \frac{i \lambda}{k} = \cR + i\cI \iff \lambda = - ikc,
	\end{equation}
	where $c \in \C$ and $\cR, \cI \in \R$. Then, it holds that
	\begin{equation}
		-\pd_r \qty\big[r\pd_r\qty(ru^r)] + \qty[k^2 + \frac{r\pd_r\Omega}{W-c}] u^r = 0,
	\end{equation}
	where $\Omega$ is the vorticity given by \eqref{def Omega}.
	
	For the simplicity of expressions, we introduce the new variable:
	\begin{equation}
		s \coloneqq \log r \qfor r > 0.
	\end{equation}
	Thus, it is obvious that
	\begin{equation}
		\dv{s} = \qty(r\dv{r})_{\restriction{r=e^s}}.
	\end{equation}
	For the simplicity of notations, we denote by
	\begin{equation}
		\dot{f} \coloneqq \dv{s} f.
	\end{equation}
	Define the new variables:
	\begin{equation}\label{def new variable}
		z (s) \coloneqq \qty(r\underline{u}^r)_{\restriction{r=e^s}} \qc w(s) \coloneqq W(e^s), \qand \varpi(s) \coloneqq \Omega(e^s) = 2w(s) + \dot{w}(s).
	\end{equation}
	Then, there holds
	\begin{equation}\label{ODE}
		-\ddot{z}(s) + \qty[k^2 + \frac{\dot{\varpi}(s)}{w(s)-c}]z(s) = 0,
	\end{equation}
	which takes the form of celebrated Rayleigh's stability equation. As $u^r$ also characterizes the (linearized) normal velocity on the fixed boundaries, it is natural to impose the boundary conditions
	\begin{equation}\label{z bc fixed}
		z_+(\log \Rin) = z_- (\log \Rout) = 0.
	\end{equation}
	When $\cU^+$ is assumed to be the unit disk, the boundary condition of $ru^r_+$ at the origin is simply the compatibility condition
	\begin{equation*}
		\qty\big(ru^r_+)_{\restriction{r=0}} = 0,
	\end{equation*}
	which is equivalent to
	\begin{equation}
		\lim_{s \to -\infty} z_+(s) \eqqcolon z_+(-\infty) = 0.
	\end{equation}
	
	Concerning the boundary conditions on the free interface, it follows from \eqref{lin bc}\textsubscript{1} and \eqref{lin eqn u}\textsubscript{2} that
	\begin{equation*}
		\begin{split}
			&r \qty{\varrho_-\qty[\lambda u^\theta_- + u^r_- \pd_r(rW_-) + ikW_- u^\theta_- + w_- u^r_-] - \varrho_+\qty[\lambda u^\theta_+ + u^r_+\pd_r(rW_+) + ik W_+ u^\theta_+ + W_+ u^r_+]} \\
			&\quad = ikr^{-2} \qty[\alpha \pd_\theta\pd_\theta\psi^\perp - \alpha r^2 \varkappa^2 \psi^\perp - r^2(\grad_\vn P_+ - \grad_\vn P_-) \psi^\perp] \qq{on} \Gms.
		\end{split}
	\end{equation*}
	Since the background velocity fields and free interface admit the profiles \eqref{def V} and \eqref{def Gmt} respectively, one obtains that
	\begin{equation*}
		\grad_\vn P_\pm = \pd_r P_\pm = \varrho_\pm rW_\pm^2 \qq{on} \Gms.
	\end{equation*}
	Particularly, by invoking the assumption that $r \equiv 1$ on $\Gms$, it follows from \eqref{lin eqn u}\textsubscript{3} that
	\begin{equation*}
		\begin{split}
			&\varrho_- \qty\big[(c - W_-)r\pd_r(ru^r_-)+ 2W_- ru^r_- + ru^r_- r\pd_rW_-] - \varrho_+ \qty\big[(c-W_+)r\pd_r(ru^r_+) + 2W_+ ru^r_+ + ru^r_+ r\pd_rW_+]  \\
			&\quad = ik\psi^\perp \qty[\alpha(k^2 -1) - r(\varrho_+ W_+^2 - \varrho_- W_-^2)] \qq{on} \{r = 1\}.
		\end{split}
	\end{equation*}
	In terms of the $s$-coordinates and new variables defined through \eqref{def new variable}, the above relation can be written as
	\begin{equation}\label{pre dispersive}
		\begin{split}
			&ik\underline{\psi}\qty[\alpha(k^2 -1) - \varrho_+ w_+^2(0) + \varrho_- w_-^2(0)] \\
			&\quad = \varrho_-\qty\big{[c-w_-(0)]\dot{z}_-(0) + 2\varpi_-(0)z_-(0)}  - \varrho_+\qty\big{[c-w_+(0)]\dot{z}_+(0) + \varpi_+(0)z_+(0)}.
		\end{split}
	\end{equation}
	As $z_\pm$ admits homogeneous boundary conditions on the fixed boundaries and $\underline{\psi}$ is assumed to be a non-zero constant, one can simply take
	\begin{equation}\label{normalize psi}
		ik\underline{\psi} = 1.
	\end{equation}
	
	Next, the evolution equation for $\psi^\perp$ \eqref{lin bc}\textsubscript{2} and the profiles \eqref{def V}, \eqref{def Gmt}, and \eqref{ptb profile} yield that
	\begin{equation*}
		ik\underline{\psi} (W_\pm - c) = u^r_\pm \qq{on} \Gms,
	\end{equation*}
	which, together with the normalization convention \eqref{normalize psi}, imply that
	\begin{equation}\label{z0 value}
		w_\pm(0) - c = z_\pm (0).
	\end{equation}
	Plugging \eqref{normalize psi}-\eqref{z0 value} into \eqref{pre dispersive}, one can obtain the dispersive relation:
	\begin{equation}\label{disp z}
		\begin{split}
			&\alpha(k^2 -1) - \varrho_+ w_+^2(0) + \varrho_- w_-^2(0) \\
			&\quad = \varrho_+\qty\big{\qty[w_+(0)-c]\dot{z}_+(0) - \varpi_+(0)z_+(0)} - \varrho_-\qty\big{\qty[w_-(0)-c]\dot{z}_-(0)-\varpi_-(0)z_-(0)}
		\end{split}
	\end{equation}
	For the sake of technical convenience, we define the functions $\zeta_\pm$ through (which is legitimate unless the boundary value problems for $z_\pm$ admit merely trivial solutions):
	\begin{equation}
		z_\pm (s) \equiv \qty\big[w_\pm(0)-c] \cdot \zeta_\pm (s).
	\end{equation}
	Then, it is obvious that $\zeta_\pm$ solve the boundary value problems:
	\begin{equation}\label{BP}
		\begin{cases*}\displaystyle
			-\dv[2]{s} \qty\big(\zeta_\pm)(s) + \qty(k^2 + \frac{\dot{\varpi}_\pm (s)}{w_\pm(s) - c}) \zeta_\pm(s) = 0, \\
			\zeta_\pm(0) = 1 \qc \zeta_-(\log\Rout) = 0 \qc \zeta_+(\log\Rin) = 0,
		\end{cases*}
	\end{equation}
	together with the dispersive relation:
	\begin{equation}\label{dispersion}
		\begin{split}
			&\alpha(k^2 -1) - \varrho_+ w_+^2(0) + \varrho_- w_-^2(0) \\
			&\quad = \varrho_+\dot{\zeta}_+(0)\qty[w_+(0)-c]^2 - \varrho_+\varpi_+(0)\qty[w_+(0)-c] + \\
			&\qquad - \varrho_-\dot{\zeta}_-(0)\qty[w_-(0)-c]^2 + \varrho_-\varpi_-(0)\qty[w_-(0)-c].
		\end{split}
	\end{equation}	
	In summary, the dynamics of linear perturbations with profile \eqref{ptb profile} around the circular background flows \eqref{def V} and \eqref{def Gmt} can be characterized by the ODE boundary value problems \eqref{BP} together with the dispersive relation \eqref{dispersion}.
	
	\begin{defi}\label{def mode}
		A pair $(c, k)$ with $c \in \C$ and $k \in \Z\setminus\{0\}$ is called a (neutral) mode if the ODE boundary value problems \eqref{BP} are solvable, whose solutions also satisfy the dispersive relation \eqref{dispersion}. This mode is called unstable if $\cI \coloneqq \Im{c} > 0$.
	\end{defi}
	
	\subsection{Examples}\label{sec ex}
	
	Here we present some examples with simple background flows.
	
	\subsubsection{One-phase flows (water waves)}
	We first assume that $\varrho_- = 0$, then, the problem is reduced to classical (capillary) water wave issues.
	
	\begin{ex}[Violation of Taylor's sign condition]\label{ex 1}
		Suppose that $\alpha = 0$, $\varrho_+ = 1$, $w_+ \equiv 1$, and $\cUs^+$ is the unit disk. Then, the linear problem is the perturbation of free-boundary Euler equations without surface tension around constant vortices. The ODE in \eqref{BP} can be written as
		\begin{equation*}
		\dv[2]{s} (\zeta_+) = k^2 \zeta_+ \qfor s <0.
		\end{equation*}
		The boundary conditions are
		\begin{equation*}
			\zeta_+(0) = 1 \qand \zeta_+ (-\infty) = 0.
		\end{equation*}
		Therefore, the solution can be expressed as:
		\begin{equation*}
			\zeta_+(s) = \exp(\ak s) \qfor s \le 0.
		\end{equation*}
		In particular, one has
		\begin{equation*}
			\dot{\zeta}_+(0) = \ak.
		\end{equation*}
		Thus, the dispersive relation \eqref{dispersion} reads
		\begin{equation*}
			\qty[c-\qty(1-\frac{1}{\ak})]^2 = - \frac{1}{\ak}\qty(1-\frac{1}{\ak}),
		\end{equation*}
		which has two non-real roots if $\ak \ge 2$. Namely, the constant vortex is linearly unstable for all large wave numbers, which coincides with classical instability/ill-posedness results (see, for example, \cite{Ebin1987}).
	\end{ex}
	
	\begin{ex}[Stabilizing effect of capillary forces]
		Suppose that $\alpha > 0$, $\varrho_+ = 1$, $w_+ \equiv \text{const.} B$, and $\cUs^+$ is the unit disk. Then, the only difference to Example \ref{ex 1} is the existence of surface tension. The dispersive relation \eqref{dispersion} now reads
		\begin{equation*}
			\qty[c-B\qty(1-\frac{1}{\ak})]^2 = \frac{\ak-1}{k^2}\qty[\alpha \ak(\ak+1) - B^2].
		\end{equation*}
		Hence, there exists no unstable mode for large wave numbers. Particularly, the capillary water wave problem around constant vortices has no unstable modes provided that
		\begin{equation*}
			6\alpha \ge B^2.
		\end{equation*}
		This reflects the stabilizing effect of the surface tension.
	\end{ex}
	
	\begin{ex}[Surface waves around Taylor-Couette flows]\label{ex TC flow}
		Suppose that $0 < \Rin < 1$, $w_+(s) = Ae^{-2s} + B$ (here $A, B \in \R$ are both constants), which corresponds to the Taylor-Couette flow:
		\begin{equation*}
			\vV_+ = \qty(\frac{A}{r} + Br) \etheta.
		\end{equation*}
		In this case, $\varpi_+(s) \equiv 2B$, and $\zeta_+$ solves the boundary value problem:
		\begin{equation*}
			\begin{cases*}
				\dv[2]{s} (\zeta_+) = k^2 \zeta_+ \qfor \log\Rin < s < 0,\\
				\zeta_+(0) = 1 \qand \zeta_+(\log\Rin) = 0.
			\end{cases*}
		\end{equation*}
		It is standard to calculate that
		\begin{equation*}
			\zeta_+(s) = \frac{1}{1-\Rin^{2\ak}} \exp(\ak s) - \frac{\Rin^{2\ak}}{1-\Rin^{2\ak}}\exp(-\ak s) \qfor \log\Rin \le s \le 0.
		\end{equation*}
		Therefore, the dispersive relation \eqref{dispersion} can be rewritten as
		\begin{equation}\label{disp TC ww}
			\frac{1+\Rin^{2\ak}}{1-\Rin^{2\ak}}\cdot \ak\qty[c-\qty(A+B-\frac{1-\Rin^{2\ak}}{\ak(1+\Rin^{2\ak})}\cdot B)]^2 = \frac{\alpha}{\varrho_+}\qty(k^2-1) + \frac{1-\Rin^{2\ak}}{\ak(1+\Rin^{2\ak})}\cdot B^2 - (A+B)^2
		\end{equation}
		In particular, when $ (A+B) = 0$, the water wave around Taylor-Couette flows is linearly stable regardless the existence of surface tension. Indeed, $(A+B) = 0$ corresponds to the quiescent water-vacuum interface.
	\end{ex}
	
	\subsubsection{Two-phase flows (vortex sheets)}
	For the simplicity of notations, we assume that $\varrho_+ > 0$ and denote by
	\begin{equation*}
		\varepsilon \coloneqq \frac{\varrho_-}{\varrho_+}.
	\end{equation*}
	\begin{ex}[Interactions between two Taylor-Couette flows]
	Suppose that the outside background flow is also of Taylor-Couette type, i.e.
	\begin{equation*}
		w_- (s) = a e^{-2s} + b,
	\end{equation*}
	where $a, b \in \R$ are both constants. It is clear that $\zeta_-$ is given by the solution formula
	\begin{equation*}
		\zeta_-(s) = \frac{1}{1-\Rout^{2\ak}}\exp(\abs{k}s) - \frac{\Rout^{2\ak}}{1-\Rout^{2\ak}}\exp(-\ak s) \qfor 0 \le s \le \log\Rout.
	\end{equation*}
	Here we note that $\Rout > 1$. Assume further that the interior region is an annulus and the background flow is also the Taylor-Couette flow, i.e., $0 < \Rin < 1$ and
	\begin{equation*}
		w_+(s) = Ae^{-2s} + B.
	\end{equation*}
	Then, the dispersive relation \eqref{dispersion} can be rewritten as
	\begin{equation}\label{disp TC}
		\begin{split}
			&\ak\qty(\frac{1+\Rin^{2\ak}}{1-\Rin^{2\ak}}+\varepsilon\cdot\frac{1+\Rout^{2\ak}}{\Rout^{2\ak}-1})\qty[c - \frac{\ak\cdot\frac{1+\Rin^{2\ak}}{1-\Rin^{2\ak}}(A+B) - B +\varepsilon\ak\cdot\frac{1+\Rout^{2\ak}}{\Rout^{2\ak}-1}(a+b) + \varepsilon b}{\ak\qty(\frac{1+\Rin^{2\ak}}{1-\Rin^{2\ak}}+\varepsilon\cdot\frac{1+\Rout^{2\ak}}{\Rout^{2\ak}-1})}]^2 \\
			&\quad = \frac{\alpha}{\varrho_+}\qty(k^2-1) + \frac{\qty[\ak\cdot\frac{1+\Rin^{2\ak}}{1-\Rin^{2\ak}}(A+B) - B +\varepsilon\ak\cdot\frac{1+\Rout^{2\ak}}{\Rout^{2\ak}-1}(a+b) + \varepsilon b]^2}{\ak\qty(\frac{1+\Rin^{2\ak}}{1-\Rin^{2\ak}}+\varepsilon\cdot\frac{1+\Rout^{2\ak}}{\Rout^{2\ak}-1})} + \qty(B^2-A^2) + \\
			&\qquad - \ak\frac{1+\Rin^{2\ak}}{1-\Rin^{2\ak}}(A+B)^2 + \varepsilon\qty(a^2-b^2) - \ak\varepsilon\frac{1+\Rout^{2\ak}}{\Rout^{2\ak}-1}(a+b)^2.
		\end{split}
	\end{equation}
	The sign of the right hand side expression will determine the stability of $k$-waves. Heuristically, once those physical parameters are fixed, the right hand side of \eqref{disp TC} can be viewed as
	\begin{equation*}
		\frac{\alpha}{\varrho_+} k^2 + \order{\ak},
	\end{equation*}
	which indicates the stabilization effect of capillary forces, as all waves with large wave numbers are stable.
	
	On the other hand, when taking $A = a = 0$ and $B = b$, one would obtain
	\begin{equation*}
		\boxed{\text{R.H.S. of } \eqref{disp TC}} = \frac{\alpha}{\varrho_+}\qty(k^2 -1) - b^2(1-\varepsilon)\qty[1-\frac{1-\varepsilon}{\ak(1+\varepsilon)}].
	\end{equation*}
	If $\varepsilon < 1$ and $b \gg 1$, the surface wave is linearly unstable for small wave numbers, although the velocities and vorticities are both continuous across the interface. Comparing this with Example \ref{ex TC flow} reveals the distinct dynamics of two-phase and one-phase flows.
	\end{ex}
	
	\section{Instabilities and Critical Layers}
	
	\subsection{Locations of Unstable Modes}\label{sec semi-circle}
	We first establish a semi-circle type result (see also \cite{Howard1961} and \cite{Liu2024}) on the location of possible unstable modes in terms of the range of angular velocities of background flows. From now on, we always assume that $\varrho_+ \ge \varrho_-$.
	
	\begin{theorem}
		Assume that $(c, k)$ is a pair of constants with $c = \cR + i\cI \in \C \setminus \R$ and $k \in \Z\setminus\{0\}$, and the ODE boundary value problems \eqref{BP} with dispersive relation \eqref{dispersion} admit non-trivial solutions. Denote by
		\begin{equation*}
			m \coloneqq \inf \qty\big(\qty{w_-(s) \colon 0 \le s < \log\Rout} \cup \qty{w_+(s) \colon \log\Rin < s \le 0})
		\end{equation*}
		and
		\begin{equation*}
			M \coloneqq \sup \qty\big(\qty{w_-(s) \colon 0 \le s < \log\Rout} \cup \qty{w_+(s) \colon \log\Rin < s \le 0}).
		\end{equation*}
		Then, for all $\ak \ge 1$ and $\varrho_+ \ge \varrho_-$, there holds
		\begin{equation*}
			\alpha\qty(k^2 -1) > mM(\varrho_+ - \varrho_-)
			\implies
			\qty(\cR - \frac{m+M}{2})^2 + \cI^2 < \qty(\frac{M-m}{2})^2.
		\end{equation*}
		Moreover, if $\ak \ge 2$ or $\varrho_+ > \varrho_-$, it holds that
		\begin{equation*}
			\alpha\qty(k^2 -1) \ge mM(\varrho_+ - \varrho_-)
			\implies
			\qty(\cR - \frac{m+M}{2})^2 + \cI^2 \le \qty(\frac{M-m}{2})^2.
		\end{equation*}
	\end{theorem}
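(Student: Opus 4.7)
The plan is to adapt Howard's classical semi-circle argument from the shear-flow case to the present polar, two-phase setting by first producing a clean quadratic-in-$c$ energy identity from the system \eqref{BP}--\eqref{dispersion}, and then running the standard Howard maneuver of splitting into real and imaginary parts and invoking the pointwise inequality $(w-m)(w-M)\le 0$.

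The main technical step is a Howard-type substitution. Setting $F_\pm(s) := z_\pm(s)/(w_\pm(s)-c)$ in each region (well-defined since $c\notin\mathbb{R}$ forces $w_\pm - c \ne 0$), a direct calculation using $\dot\varpi_\pm = 2\dot w_\pm + \ddot w_\pm$ converts the Rayleigh equation in \eqref{BP} into the conservation form
\[
 \dv{s}\!\big[(w_\pm - c)^2 \dot F_\pm\big] \;=\; \big[k^2(w_\pm - c)^2 + 2(w_\pm - c)\dot w_\pm\big]F_\pm,
\]
with $F_\pm(0) = 1$ and $F_\pm = 0$ on the fixed walls. I would multiply by $\bar F_\pm$, integrate over $(\log\Rin,0)$ and $(0,\log\Rout)$ respectively, and integrate by parts both the leading term and the cross term $2(w-c)\dot w\,|F|^2 = \dv{s}(w-c)^2\cdot|F|^2$. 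The algebraic identity $|\dot F|^2 - 2\operatorname{Re}(\dot F\bar F) + |F|^2 = |\dot F - F|^2$ then packages the bulk integrand into $R_\pm := |\dot F_\pm - F_\pm|^2 + (k^2-1)|F_\pm|^2 \ge 0$ for $\ak\ge 1$. Translating back to $\zeta_\pm$ via $(w_\pm(0)-c)^2\dot F_\pm(0) = (w_\pm(0)-c)^2\dot\zeta_\pm(0) - \dot w_\pm(0)(w_\pm(0)-c)$, substituting into \eqref{dispersion}, and using $\varpi_\pm(0) - \dot w_\pm(0) = 2w_\pm(0)$ causes every $\dot w_\pm(0)$ and $\varpi_\pm(0)$ contribution to cancel, while the remaining boundary terms $\varrho_+(w_+(0)-c)^2 - \varrho_-(w_-(0)-c)^2$ recombine with the centripetal pressure contributions $-\varrho_+w_+^2(0) + \varrho_-w_-^2(0)$ to yield the clean identity
\[
 \alpha(k^2-1) \;=\; \sum_{\pm}\varrho_\pm\!\int_\pm (w_\pm - c)^2 R_\pm\,ds \;+\; (\varrho_+ - \varrho_-)\,c^2.
\]

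From here the Howard scheme runs classically. Taking imaginary parts of this identity and dividing by $\cI\ne 0$ gives $\sum_\pm\varrho_\pm\!\int w_\pm R_\pm\,ds = \cR K$, where $K := \sum_\pm\varrho_\pm\!\int R_\pm\,ds + (\varrho_+ - \varrho_-)$; taking real parts and substituting produces $\sum_\pm\varrho_\pm\!\int w_\pm^2 R_\pm\,ds = \alpha(k^2-1) + (\cR^2 + \cI^2)K$. Plugging both into the non-positive quantity $\sum_\pm\varrho_\pm\!\int(w_\pm - m)(w_\pm - M)R_\pm\,ds \le 0$ and completing the square in $\cR$ yields
\[
 K\!\left[\big(\cR - \tfrac{m+M}{2}\big)^2 + \cI^2 - \big(\tfrac{M-m}{2}\big)^2\right] \;\le\; mM(\varrho_+ - \varrho_-) - \alpha(k^2-1),
\]
from which both implications of the theorem follow once $K>0$. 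When $\varrho_+ > \varrho_-$ this is automatic; when $\ak\ge 2$ the term $(k^2-1)|F_\pm|^2$ in $R_\pm$ combined with $F_\pm(0)=1$ forces $\int_\pm R_\pm > 0$ and hence $K > 0$; in the remaining borderline case $\ak=1$ with $\varrho_+ = \varrho_-$, vanishing of $R_\pm$ would force $F_\pm \propto e^s$, which contradicts the homogeneous boundary condition of $F_-$ at $\log\Rout$.

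The principal obstacle lies in the algebraic cancellation of the second paragraph. Unlike the shear-flow case, the polar Rayleigh equation supplies the extra $2(w-c)\dot w$ term in conservation form, the dispersive relation carries the centripetal pressure jump $\varrho_+ w_+^2(0) - \varrho_- w_-^2(0)$, and the passage from $\dot\zeta_\pm(0)$ to $\dot F_\pm(0)$ introduces a $\dot w_\pm(0)(w_\pm(0)-c)$ correction. Verifying that these three ``geometric correction'' sources cancel exactly so as to leave a pure quadratic $(\varrho_+ - \varrho_-)\,c^2$ on the right of the clean identity is the technical crux, and it is precisely this cancellation that makes the Howard semi-circle with center $\tfrac{m+M}{2}$ and radius $\tfrac{M-m}{2}$ emerge naturally from the extremes of the angular-velocity range.
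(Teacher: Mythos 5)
Your proposal is correct and follows essentially the same route as the paper: the substitution $F_\pm = z_\pm/(w_\pm-c)$ is exactly the paper's $\chi_\pm$, your $R_\pm$ is the paper's quadratic form $X_\pm = k^2\abs{\chi_\pm}^2+\abs{\dot\chi_\pm}^2-2\ev{\chi_\pm,\dot\chi_\pm}$, and the cancellation you flag as the crux does go through, yielding the paper's identity $\sum_\pm\varrho_\pm\int(w_\pm-c)^2X_\pm\dd{s}=\alpha(k^2-1)+(\varrho_--\varrho_+)c^2$, after which the real/imaginary split and $(w-m)(w-M)\le 0$ proceed exactly as you describe. Your closing discussion of when $K>0$ is slightly more detailed than the paper's one-line conclusion, but the argument is the same.
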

	
	\begin{proof}
		Since $c \in \C\setminus \R$, it is legitimate to define the complex valued functions:
		\begin{equation}
			\chi_\pm(s) \coloneqq \frac{w_\pm(0)-c}{w_\pm(s) - c} \cdot \zeta_\pm (s).
		\end{equation}
		Then, $\chi_\pm$ satisfies the ODE:
		\begin{equation}\label{eq chi}
			- \dv{s}\qty[(w_\pm(s)-c)^2\dot{\chi}_\pm] + k^2\qty(w_\pm(s) - c)^2 \chi_\pm + \dv{s}\qty[(w_\pm(s)-c)^2]\chi_\pm = 0,
		\end{equation}
		with boundary conditions:
		\begin{equation}
			\chi_\pm(0) = 1 \qand \chi_+(\log\Rin) = \chi_-(\log\Rout) = 0.
		\end{equation}
		The dispersive relation \eqref{dispersion} can be written as
		\begin{equation}\label{disp chi}
			\begin{split}
				&\varrho_+ \dot{\chi}_+(0)\qty[w_+(0)-c]^2 - \varrho_- \dot{\chi}_-(0)\qty[w_-(0)-c]^2 \\
				&\quad = \alpha\qty(k^2 -1) + \varrho_+w_+^2(0)-\varrho_-w_-^2(0) + 2\qty[\varrho_-w_-(0) - \varrho_+w_+(0)]c.
			\end{split}
		\end{equation}
		
		Multiplying \eqref{eq chi} by $\chi_\pm^*$ (here ${}^*$ represents the complex conjugate) and integrating over the interval $(\log\Rin, 0)$ or $(0, \log\Rout)$ leads to
		\begin{equation}\label{int chi-}
			-\qty[w_-(0)-c]^2\dot{\chi}_-(0) + \qty[w_-(0)-c]^2 = \int_0^{\log\Rout} \qty[w_-(s)-c]^2 \underbrace{\qty(k^2\abs{\chi_-}^2 + \abs{\dot{\chi}_-}^2 - 2\ev{\chi_-, \dot{\chi}_-})}_{\eqqcolon X_-} \dd{s}
		\end{equation}
		and
		\begin{equation}\label{int chi+}
			\qty[w_+(0)-c]^2 \dot{\chi}_+(0) - \qty[w_+(0)-c]^2 = \int_{\log\Rin}^0 \qty[w_+(s)-c]^2 \underbrace{\qty(k^2\abs{\chi_+}^2 + \abs{\dot{\chi}_+}^2 - 2\ev{\chi_+, \dot{\chi}_+})}_{\eqqcolon X_+} \dd{s},
		\end{equation}
		where $\ev{\cdot, \cdot}$ represents the inner product in $\R^2 \simeq \C$. More precisely, for $z_j = x_j + i y_j \in \C $, $ (j =1, 2)$ with $x_j, y_j \in \R$, we define
		\begin{equation*}
			\ev{z_1, z_2} \coloneqq x_1x_2 + y_1 y_2 = \frac{1}{2}\qty(z_1z_2^* + z_1^*z_2).
		\end{equation*}
		Then, it is clear that $X_\pm \ge 0$, and when $\ak \ge 2$, $X_\pm = 0$ iff $\chi_\pm \equiv 0$.
		
		Combining \eqref{disp chi} with \eqref{int chi-}-\eqref{int chi+}, it is routine calculate that
		\begin{equation}
			\begin{split}
				&\int_{\log\Rin}^0 \varrho_+\qty[w_+(s)-c]^2 X_+(s) \dd{s} + \int_{0}^{\log\Rout} \varrho_- \qty[w_-(s)-c]^2 X_-(s) \dd{s} \\
				&\quad = \alpha\qty(k^2 -1) + (\varrho_- - \varrho_+)c^2
			\end{split}
		\end{equation}
		Taking the imaginary and real parts of both sides, one obtains that
		\begin{equation}\label{im chi}
			\begin{split}
				&\int_{\log\Rin}^0 \varrho_+\qty[\cR-w_+(s)]X_+(s) \dd{s} + \int_0^{\log\Rout}\varrho_-\qty[\cR - w_-(s)] X_-(s) \dd{s}\\
				&\quad = \cR(\varrho_- - \varrho_+)
			\end{split}
		\end{equation}
		and
		\begin{equation}\label{re chi}
			\begin{split}
				&\int_{\log\Rin}^0 \varrho_+\qty(\qty[w_+(s)-\cR]^2 - \cI^2)X_+(s) \dd{s} + \int_0^{\log\Rout}\varrho_- \qty(\qty[w_-(s)-\cR]^2 - \cI^2) X_-(s) \dd{s} \\
				&\quad = \alpha\qty(k^2 -1) + (\varrho_- - \varrho_+)\qty(\cR^2 - \cI^2).
			\end{split}
		\end{equation}
		
		On the other hand, it is clear that
		\begin{equation}\label{rel w m M}
			\qty\big[w_\pm(s) - m] \cdot \qty\big[w_\pm(s) - M] \le 0 \quad \forall\, s.
		\end{equation}
		Then, the elementary relation
		\begin{equation*}
			\begin{split}
				&(w-m)(w-M) \\
				&\quad = \qty[(w-\cR)^2 - \cI^2] + (m+M-2\cR)(\cR-w) + \qty(\cR-\frac{m+M}{2})^2 + \cI^2 - \qty(\frac{M-m}{2})^2,
			\end{split}
		\end{equation*}
		together with \eqref{im chi}-\eqref{rel w m M}, yield that
		\begin{equation}
			\begin{split}
				0 &\ge \int_{\log\Rin}^0 \varrho_+ \qty[w_+(s)-m]\qty[w_+(s)-M]X_+(s) \dd{s} + \int_0^{\log\Rout} \varrho_- \qty[w_-(s)-m]\qty[w_-(s)-M]X_-(s) \dd{s} \\
				&= \alpha\qty(k^2-1) + (\varrho_- - \varrho_+)\qty(\cR^2 - \cI^2) + \qty(m+M-2\cR)\cR(\varrho_- - \varrho_+) + \\
				&\qquad + \qty[\qty(\cR-\frac{m+M}{2})^2 + \cI^2 - \qty(\frac{M-m}{2})^2] \qty(\int_{\log\Rin}^0 \varrho_+ X_+(s) \dd{s} + \int_{0}^{\log\Rout} \varrho_- X_-(s) \dd{s}) \\
				&= \qty[\qty(\cR-\frac{m+M}{2})^2 + \cI^2 - \qty(\frac{M-m}{2})^2] \qty((\varrho_+ - \varrho_-) + \int_{\log\Rin}^0 \varrho_+ X_+(s) \dd{s} + \int_{0}^{\log\Rout} \varrho_- X_-(s) \dd{s}) + \\
				&\qquad + \alpha\qty(k^2 - 1) + mM(\varrho_- - \varrho_+)
			\end{split}
		\end{equation}
		Note that $\varrho_+ \ge \varrho_-$ and $X_\pm \ge 0$, this concludes the proof.
	\end{proof}
	
	\subsection{Capillary Water Waves near Taylor-Couette Flows}\label{sec TC flow}
	From now on, we assume that $\varrho_+ > 0$ and the flow in the inner annulus is of the  Taylor-Couette type, i.e.,
	\begin{equation}
		\vV_+ = \qty(\frac{A}{r} + Br) \etheta,
	\end{equation}
	for constants $A, B \in \R$ ($A = 0$ if $\cU^+_*$ is the unit disk, for which $\Rin = 0$). We denote by
	\begin{equation}
		\varepsilon \coloneqq \frac{\varrho_-}{\varrho_+}
	\end{equation}
	the density ratio.	Then, the dispersive relation \eqref{dispersion} can be written as
	\begin{equation}\label{disp delta}
		\begin{split}
			\frac{\alpha}{\varrho_+}\qty(k^2 -1) = &-\varepsilon\dot{\zeta}_-(0)\qty\big[c-w_-(0)]^2 -\varepsilon\qty\big[\varpi_-(0)c - w_-(0)\varpi_-(0) + w_-(0)w_-(0)]+ \\
			&+\frac{\ak\qty(1+\Rin^{2\ak})}{1-\Rin^{2\ak}} \qty[c-\qty(A+B-\frac{\qty(1-\Rin^{2\ak})B}{\ak\qty(1+\Rin^{2\ak})})]^2- \frac{\qty(1-\Rin^{2\ak})B^2}{\ak\qty(1+\Rin^{2\ak})}+ (A+B)^2,
		\end{split}
	\end{equation}
	which can be viewed as an algebraic equation for $c$ with multiple parameters.
	It is clear that, when $\varepsilon = 0$ and
	\begin{equation*}
		\frac{\alpha}{\varrho_+}\qty(k^2 -1) + \frac{\qty(1-\Rin^{2\ak})B^2}{\ak\qty(1+\Rin^{2\ak})} > (A+B)^2,
	\end{equation*}
	there exist two distinct real roots of the algebraic equation \eqref{disp delta}:
	\begin{equation}\label{def ck}
		c_\pm^{(k)} = \qty(A+B-\frac{1-\Rin^{2\ak}}{\ak(1+\Rin^{2\ak})}\cdot B) \pm \sqrt{\frac{1-\Rin^{2\ak}}{\ak(1+\Rin^{2\ak})}\qty[\frac{\alpha}{\varrho_+}\qty(k^2-1) + \frac{1-\Rin^{2\ak}}{\ak(1+\Rin^{2\ak})}\cdot B^2 - (A+B)^2]}.
	\end{equation}
	When the background flow $\vV_-$, the density $\varrho_+$, and the surface tension coefficient $\alpha$ are fixed, the algebraic equation \eqref{disp delta} (for $c$) has only two parameters, say, $\varepsilon$ and $\varepsilon\dot{\zeta}_-(0)$. Therefore, the two solutions to \eqref{disp delta} can be expressed analytically in terms of them. More precisely, one has
	\begin{equation}\label{eqn c}
		c_\pm = h_{\text{R}}^\pm\qty\Big(\varepsilon\Re{\dot{\zeta}_-(0)}, \varepsilon\Im{\dot{\zeta}_-(0)}, \varepsilon) + i\varepsilon\Im{\dot{\zeta}_-(0)}\cdot h_{\text{I}}^\pm\qty\Big(\varepsilon\Re{\dot{\zeta}_-(0)}, \varepsilon\Im{\dot{\zeta}_-(0)}, \varepsilon),
	\end{equation}
	where $h_{\text{R}}^\pm$ and $h_{\text{I}}^\pm$ are all real-valued analytic functions satisfying at $(0, 0, 0)$ (following from the implicit function theorem):
	\begin{equation}\label{h0}
		h_{\text{R}}^\pm(0) = c^{(k)}_{\pm} \qand h_{\text{I}}^\pm (0) = \frac{\qty[c^{(k)}_\pm - w_-(0)]^2}{\frac{2\ak\qty(1+\Rin^{2\ak})}{1-\Rin^{2\ak}} \qty[c^{(k)}_\pm-\qty(A+B-\frac{\qty(1-\Rin^{2\ak})B}{\ak\qty(1+\Rin^{2\ak})})]}.
	\end{equation}
	Here $c^{(k)}_\pm$ are the two real roots given by \eqref{def ck}. In particular, one has
	\begin{equation}
		h_{\text{I}}^+(0) > 0 \qand h_{\text{I}}^-(0) < 0,
	\end{equation}
	whenever $c^{(k)}_\pm \neq w_-(0)$.
	
	In the sequel, we shall study the instability induced by the outer regions with $\varepsilon \ll 1$, which characterizes the dynamics of air-water surface waves. The background water flow is assumed to be a Taylor-Couette one. We shall search for necessary  conditions and sufficient ones for circular wind profiles that would induce unstable surface waves. Then, the ODE boundary value problems \eqref{BP} can be rewritten as:
	\begin{equation}\label{eqn zeta}
		\begin{cases*}\displaystyle
			-\ddot{\zeta}_-(s) + \qty(k^2 + \frac{2\dot{w}_-(s) + \ddot{w}_-(s)}{w_-(s) - c})\zeta_-(s) = 0 \qfor 0 < s < \log\Rout, \\
			\zeta_-(0) = 1 \qc \zeta_-(\log\Rout) = 0,
		\end{cases*}
	\end{equation}
	and the dispersive relation is given by \eqref{eqn c}.

	\subsection{Necessity of Critical Layers for Unstable Waves}
	Suppose that $\varepsilon \ll 1$, then for each fixed wave number $k$ and wind profile $w_-(s)$, the phase velocity $c_\pm$ given by the dispersive relation \eqref{disp delta} (and hence formula \eqref{eqn c}) should be close to $c_\pm^{(k)}$ in \eqref{def ck}. Thus, the relation between $c^{(k)}_\pm$ and $w_-$ would be crucial for the (in-)stability of surface waves. Indeed, whether $c^{(k)}_\pm$ belong to the range of $w_-$ would evidently influence the linear stability, which is more precisely demonstrated in the following proposition:
	\begin{prop}
		Suppose that $w_- \in C^2$ and $c^{(k)}_+ \notin w_-([0, \log\Rout])$. Then, there exists a constant $\varepsilon_k > 0$, so that if the boundary value problem \eqref{eqn zeta} with \eqref{eqn c} is solvable for $0 < \varepsilon < \varepsilon_k$ and mode $(k, c)$ satisfying $\ak \ge 2$ and
		\begin{equation*}
			\abs{c-c^{(k)}_+} \le \frac{1}{2} \min_{0 \le s \le \log\Rout} \abs{c^{(k)}_+ - w_-(s)},
		\end{equation*}
		then $c \in \R$. The same result also holds when $c^{(k)}_+$ replaced by $c^{(k)}_-$.
	\end{prop}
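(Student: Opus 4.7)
The plan is to treat the dispersive relation \eqref{disp delta} as a single holomorphic equation in $c$, localize it in a complex neighborhood of $c^{(k)}_+$ where the Rayleigh equation \eqref{eqn zeta} has no singularity, and then combine a conjugation symmetry with the implicit function theorem and Rouch\'e's theorem to force every zero to be real.

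First, set $r := \tfrac12 \min_{s \in [0, \log\Rout]} \abs{c^{(k)}_+ - w_-(s)} > 0$ and $\Omega := \Set{c \in \C \given \abs{c - c^{(k)}_+} \le r}$. For every $c \in \Omega$, the coefficient $w_-(s) - c$ in \eqref{eqn zeta} is uniformly bounded below in modulus on $[0, \log\Rout]$, so standard analytic ODE theory produces a fundamental system depending holomorphically on $c \in \Omega$. The Wronskian-type determinant controlling unique solvability of the Dirichlet BVP is entire in $c$ and, by the hypothesis that \eqref{eqn zeta} is solvable at $c^{(k)}_+$ in the $\varepsilon = 0$ limit, nonvanishing there; hence it is nonvanishing on $\Omega$ after a harmless shrinking. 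Thus $g(c) := \dot{\zeta}_-(0; c)$ is well-defined and holomorphic on $\Omega$, and the conjugation of \eqref{eqn zeta} combined with BVP uniqueness yields $\zeta_-(s; c^*) = \zeta_-(s; c)^*$, and hence the reflection symmetry $g(c^*) = g(c)^*$.

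Rewrite \eqref{disp delta} in the form $F(c, \varepsilon) := F_0(c) + \varepsilon F_1(c, g(c)) = 0$, where $F_0$ is a real quadratic polynomial with simple real roots $c^{(k)}_\pm$ (simplicity is inherited from the assumption in \S\ref{sec TC flow} that the water-vacuum problem admits two distinct real phase velocities) and $F_1$ has real coefficients and depends polynomially on its arguments. The symmetry of $g$ gives $F(c^*, \varepsilon) = F(c, \varepsilon)^*$ for real $\varepsilon$, so the zero set of $F(\cdot, \varepsilon)$ in $\Omega$ is invariant under complex conjugation. Applying the implicit function theorem at the simple zero $c^{(k)}_+$ of $F(\cdot, 0)$ yields a unique holomorphic branch $\varepsilon \mapsto c_+(\varepsilon)$ of zeros near $c^{(k)}_+$; the conjugation symmetry forces $c_+(\varepsilon)^* = c_+(\varepsilon)$, so $c_+(\varepsilon) \in \R$ for all sufficiently small real $\varepsilon$. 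A Rouch\'e argument on $\partial\Omega$ then guarantees that for $0 < \varepsilon < \varepsilon_k$ small enough, $F(\cdot, \varepsilon)$ and $F_0$ have the same number of zeros in $\Omega$: if $c^{(k)}_- \notin \Omega$ the count is one, so the unique zero is $c_+(\varepsilon) \in \R$; if $c^{(k)}_- \in \Omega$, the triangle inequality gives $\abs{c^{(k)}_- - w_-(s)} \ge r$, so $c^{(k)}_-$ also lies outside the range of $w_-$, and the same argument produces a second real branch $c_-(\varepsilon)$ exhausting the two zeros inside $\Omega$.

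The principal subtlety is maintaining uniform holomorphic control of $g$ on a fixed neighborhood of $c^{(k)}_+$, because the statement only assumes the BVP to be solvable at the specific mode $(k, c)$ under consideration rather than throughout a disk in the complex $c$-plane. This is resolved by the observation that the Wronskian-type determinant is entire in $c$ on $\Omega$, where the Rayleigh equation has no singularity, and nondegenerate at $c^{(k)}_+$ in the $\varepsilon = 0$ limit; its zero set is therefore discrete and avoidable by a slight reduction of $r$. The remaining ingredients---analyticity of the ODE fundamental system in $c$, real-symmetry of \eqref{eqn zeta}, and simplicity of $c^{(k)}_\pm$---are standard. The hypothesis $\ak \ge 2$ does not enter this argument directly, but is natural in view of the semi-circle result of \S\ref{sec semi-circle}, which requires strict positivity of the quadratic forms $X_\pm$.
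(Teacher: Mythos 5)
Your route (holomorphy of $g(c)\coloneqq\dot{\zeta}_-(0;c)$ on a disk around $c^{(k)}_+$, the reflection symmetry $g(c^*)=g(c)^*$, the implicit function theorem and Rouch\'e) is genuinely different from the paper's, which is a direct a priori estimate on the given mode: the paper introduces $\eta(s)=\frac{w_-(0)-c}{w_-(s)-c}\bigl[\zeta_-(s)+\frac{s-\log\Rout}{\log\Rout}\bigr]$, proves a coercive energy identity using $\abs{w_-(s)-c}\gtrsim\ell$, deduces $\norm{\zeta_-}_{L^2}\lesssim\ell^{-2}$, and then reads off $\abs{\Im{\dot{\zeta}_-(0)}}\lesssim\ell^{-6}\abs{\cI}$ from the Wronskian quantity $\Phi$, so that \eqref{eqn c} gives $\abs{\cI}\lesssim\varepsilon\ell^{-6}\abs{\cI}$ and hence $\cI=0$. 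However, your argument has a genuine gap at exactly the point you flag as the "principal subtlety": you need the Dirichlet problem \eqref{eqn zeta} to be \emph{uniquely} solvable for every $c$ in the full closed disk $\Omega$ of radius $r=\tfrac12\min_s\abs{c^{(k)}_+-w_-(s)}$, and your justification does not deliver this. First, nonvanishing of the solvability determinant at $c=c^{(k)}_+$ is not a hypothesis of the proposition (the BVP \eqref{eqn zeta} does not involve $\varepsilon$, so there is no "$\varepsilon=0$ limit" of it to appeal to); it must be proved, and the proof is a Howard-type coercivity estimate for $\chi=\zeta_-/(w_--c)$ that uses precisely $\ak\ge 2$ and $c^{(k)}_+\notin w_-([0,\log\Rout])$ --- so your closing remark that $\ak\ge2$ "does not enter this argument directly" is a symptom of the missing step. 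Second, "avoidable by a slight reduction of $r$" is not available: the proposition's conclusion concerns all modes in the disk of radius $r$, so shrinking the disk proves a strictly weaker statement; moreover, at an interior zero of the determinant the inhomogeneous BVP can still be (non-uniquely) solvable, producing a mode that your function $F(\cdot,\varepsilon)$ never sees.

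The gap is closable within your framework, but only by supplying the omitted estimate: for $c\in\Omega$ one has $\abs{w_-(s)-c}\ge\ell/2$ and $\Re{c}$ outside the (connected) range $[m,M]$ of $w_-$, and multiplying the homogeneous equation \eqref{eq chi} by $\chi^*$ and integrating yields $\int(w_--c)^2\bigl(\abs{\dot{\chi}}^2+k^2\abs{\chi}^2-2\ev{\chi,\dot{\chi}}\bigr)\dd{s}=0$ with a nonnegative integrand factor that is coercive for $\ak\ge2$; the real/imaginary-part computation of \S\ref{sec semi-circle} then forces $\chi\equiv0$, so the determinant is nonvanishing on all of $\Omega$ and no shrinking is needed. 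With that lemma in place, your conjugation-plus-Rouch\'e count does close the proof (and has the merit of also locating the real mode); as written, though, the proposal defers the only nontrivial analytic input to an unjustified assertion.
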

	
	\begin{proof}
		Since $c^{(k)}_+$ does not belong to the range of $w_-(s)$, one can define the function
		\begin{equation}
			\eta(s) \coloneqq \frac{w_-(0)-c}{w_-(s) -c} \qty[\zeta_-(s) + \frac{s-\log\Rout}{\log\Rout}].
		\end{equation}
		Then, $\eta(s)$ satisfies the ODE
		\begin{equation}\label{eqn eta}
				\begin{split}
					&-\dv{s}\qty[(w_-(s) - c)^2 \dot{\eta}] + k^2\qty[w_-(s) - c]^2\eta + \dv{s}\qty[(w_-(s) - c)^2] \eta \\
					&\quad = \qty[k^2(w_-(s) - c) + 2\dot{w}_-(s) + \ddot{w}_-(s)]\qty[w_-(0)-c]\cdot\frac{s - \log\Rout}{\log\Rout} \\
					&\quad\eqqcolon \gamma(s),
				\end{split}
		\end{equation}
		together with the boundary conditions
		\begin{equation}
			\eta(0) = \eta(\log\Rout) = 0.
		\end{equation}
		Multiplying \eqref{eqn eta} by $\eta^*$ and integrating by parts yield that
		\begin{equation}\label{est eta 0}
			\int_{0}^{\log\Rout} \qty\big(w_-(s)-c)^2 \qty(\abs{\dot{\eta}}^2 + k^2\abs{\eta}^2 - 2\ev{\eta, \dot{\eta}}) \dd{s} = \int_{0}^{\log\Rout} \gamma(s)\eta^*(s) \dd{s}.
		\end{equation}
		Denote by
		\begin{equation*}
			\ell \coloneqq \min_{s \in [0, \log\Rout]} \abs{c^{(k)}_+ - w_-(s)}.
		\end{equation*}
		It is clear that
		\begin{equation*}
			\abs{\qty\big(w_-(s)-c)^2} \gtrsim \ell^2 \qq{for all} 0 \le s \le \log\Rout.
		\end{equation*}
		Then, through taking real or imaginary parts of \eqref{est eta 0}, it follows that
		\begin{equation}
			\ell^2\qty(k^2\norm*{\eta}_{L^2}^2 + \norm*{\dot\eta}_{L^2}^2) \lesssim \int_0^{\log\Rout} \abs*{\gamma}(s) \cdot \abs*{\eta}(s) \dd{s}.
		\end{equation}
		Note that $\gamma(s)$ is completely determined by $w_-$ and $\Rout$, it follows from the Cauchy-Schwartz inequality that
		\begin{equation*}
			\norm*{\eta}_{L^2} + k^{-1}\norm*{\dot\eta}_{L^2} \lesssim \ell^{-2},
		\end{equation*}
		where the implicit constant depends on $\Rout$ and $\norm{w_-}_{C^2}$. The construction of $\eta$ yields the estimate
		\begin{equation*}
			\norm*{\zeta_-}_{L^2} + k^{-1}\norm*{\dot{\zeta}_-}_{L^2} \lesssim \ell^{-2},
		\end{equation*}
		
		On the other hand, note that the function $\Phi$ defined through
		\begin{equation}\label{def Phi}
			\Phi(s) \coloneqq \frac{i}{2}\qty(\zeta_- \dot{\zeta}_-^* - \dot{\zeta}_-\zeta_-^*)
		\end{equation}
		satisfies
		\begin{equation}\label{ODE Phi}
			\begin{cases*}
				\displaystyle
				\dv{s}\Phi = \frac{\abs{\zeta_-}^2(2\dot{w}_- + \ddot{w}_-)}{\abs{w_- - c}^2}\cdot\cI, \\
				\Phi(0) = \Im{\dot{\zeta}_-(0)} \qc \Phi(\log\Rout) = 0.
			\end{cases*}
		\end{equation}
		Particularly, fundamental theorem of Calculus implies that
		\begin{equation*}
			\abs{\Im{\dot{\zeta}_-(0)}} \lesssim \ell^{-6}\abs{\cI},
		\end{equation*}
		which, together with the relation \eqref{eqn c}, yield
		\begin{equation*}
			\abs{\cI} \lesssim \varepsilon \ell^{-6} \abs{\cI}.
		\end{equation*}
		Thus, as long as $\varepsilon \ll 1$, one obtains $\cI = 0$.
	\end{proof}
	
	\begin{remark}
		The above proposition indicates that, for $C^2$-smooth wind profile $w_-$, the unstable mode can only bifurcate from the the range of $w_-$. If $w_-$ is not $C^2$, there might be unstable modes lurking elsewhere, which is indicated briefly in \S\ref{sec other instability} (see also \cite[\S5]{BuhlerSWZ2016}).
	\end{remark}

	\subsection{Instability Induced by Critical Layers}\label{sec CL}
	
	Now, we turn to show that critical layers can indeed lead to unstable modes for sufficiently smooth wind profiles. Regarding \eqref{ODE Phi}, one may first formally derive that
	\begin{equation*}
		\Im{\dot{\zeta}_-(0)} = - \int_0^{\log\Rout} \cI \cdot \frac{\dot{\varpi}_-(s)\cdot\abs{\zeta_-(s)}^2}{\abs{w_-(s) - c}^2} \dd{s} = -\cI \int_0^{\log\Rout}  \frac{\dot{\varpi}_-(s)\cdot\abs{\zeta_-(s)}^2}{\qty[w_-(s) - \cR]^2 + \cI^2} \dd{s}.
	\end{equation*}
	Assume that $\cR$ is a regular value of $w_-$, and $\abs{\zeta_-}^2, \abs{\dot{w}_-}, \dot{\varpi}_-$ are all slowly varying.
	First note that, for a family of functions parameterized by $\nu > 0$:
	\begin{subequations}\label{conver delta}
		\begin{equation}
			f_\nu(t) \coloneqq \frac{\nu}{1+(\nu t)^2} \I_{[-t_0, t_0]}(t),
		\end{equation}
		where $t_0$ is an arbitrary positive constant, there holds
		\begin{equation}
			f_\nu \to \pi\vdelta_0 \qas \nu \to 0 \qin \mathcal{D}'(\R),
		\end{equation}
	\end{subequations}
	for which $\vdelta_0$ is the Dirac mass. 
	Thus, one may compute that, near a regular point $\sigma$ for which $w_-(\sigma)=\cR$ and $\dot{w}_-(\sigma) \neq 0$, there holds
	\begin{equation*}
		\begin{split}
			&-\cI\int_{\sigma-\epsilon_0}^{\sigma+\epsilon_0} \frac{\dot{\varpi}_-(s)\cdot\abs{\zeta_-(s)}^2}{\qty[w_-(s) - \cR]^2 + \cI^2} \dd{s} \\
			& \quad \approx -\cI\int_{\sigma-\epsilon_0}^{\sigma+\epsilon_0} \frac{\dot{\varpi}_-(\sigma)\cdot\abs{\zeta_-(\sigma)}^2}{\abs{\dot{w}_-(\sigma)}^2\abs{s-\sigma}^2 + \cI^2} \dd{s} \\
			& \quad \approx - \frac{\dot{\varpi}_-(\sigma) \cdot \abs{\zeta_-(\sigma)^2}}{\abs{\dot{w_-}(\sigma)}} \int_{\sigma-\epsilon_0}^{\sigma+\epsilon_0} \frac{\frac{\abs{w}_-(\sigma)}{\cI} \dd{s}}{1 + \qty[\frac{\abs{\dot{w}_-(\sigma)}}{\cI}(s-\sigma)]^2} \\
			&\quad \xrightarrow{\cI \to 0} - \sgn(\cI) \pi \frac{\dot{\varpi}_-(\sigma) \cdot \abs{\zeta_-(\sigma)^2}}{\abs{\dot{w_-}(\sigma)}}.
		\end{split}
	\end{equation*}
	Particularly, one would obtain that
	\begin{equation*}
		\Im{\dot{\zeta}_-(0)} \asymp - \sgn(\cI) \pi \sum_{\sigma \in (w_-)^{-1}(\{\cR\})}\frac{\dot{\varpi}_-(\sigma) \cdot \abs{\zeta_-(\sigma)}^2}{\abs{\dot{w_-}(\sigma)}} \qq{as} \cI \asymp 0.
	\end{equation*}
	
	On the other hand, when $\varepsilon \ll 1$, i.e., the density of air is sufficiently small when compared to that of water, the phase velocities of $k$-waves ought to be close to the two roots given in \eqref{def ck}, which are the corresponding eigenvalues for the water-vacuum problems. In view of the previous heuristic arguments and the dispersive relation \eqref{eqn c}, one may infer that the unstable mode (if exists) would bifurcate from $c^{(k)}_\pm$, as long as the evaluations of $\dot{\varpi}_-$ in the preimage of $c^{(k)}_\pm$ admit the same sign. More precisely, there holds the following result:
	\begin{theorem}\label{thm instability}
		Suppose that $w_- \in C^4$, $1<\Rout<\infty$, and $c^{(k)}_+ \in \R$ given by \eqref{def ck} is a regular value of $w_-$, i.e.,
		\begin{equation*}
			\Set*{s \given w_-(s) = c^{(k)}_+} \eqqcolon \{s_1, \dots, s_n\} \subset (0, \log\Rout), \qq{with} \dot{w}_-(s_j) \neq 0 \; (1 \le j \le n).
		\end{equation*}
		Assume further that
		\begin{equation*}
			c^{(k)}_+ \dot{\varpi}_-(s_j) \le 0 \qfor 1\le j \le n, \qand c^{(k)}_+\dot{\varpi}_-(s_l) < 0 \qfor l = n-1 \text{ or } n.
		\end{equation*}
		Then, for $\varepsilon \coloneqq {\varrho_-}/{\varrho_+} \ll 1$, there exists a phase velocity $c \in \C\setminus\R$ satisfying $\abs{c-c^{(k)}_+} = \order{\varepsilon}$ and $\Im{c} > 0$ with a positive lower bound, so that the ODE problem \eqref{eqn zeta} together with the dispersive relation \eqref{eqn c} is solvable. Namely, the wind-perturbed water waves are linearly unstable. The same results hold with all $c^{(k)}_+$ replaced by $c^{(k)}_-$.
	\end{theorem}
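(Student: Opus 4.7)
The strategy is a bifurcation / fixed-point argument: view the dispersive relation \eqref{eqn c} as an equation for $c \in \C$ with $\varepsilon$ as the small parameter, and track how the real root $c^{(k)}_+$ (obtained at $\varepsilon=0$) perturbs into the upper half-plane. Concretely, I would write $c = c^{(k)}_+ + \delta$ with $|\delta| = \Order{}{\varepsilon}$, plug into \eqref{eqn c}, and show that the resulting equation for $\delta$ admits a solution whose imaginary part has a strictly positive lower bound of order $\varepsilon$.

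\textbf{Step 1: Solving the Rayleigh ODE \eqref{eqn zeta} for $c$ off the real axis.} For $c \in \C$ with $\cI > 0$ in a fixed small neighborhood of $c^{(k)}_+$, the coefficient $\dot\varpi_-(s)/(w_-(s)-c)$ is smooth, so \eqref{eqn zeta} is uniquely solvable and $c \mapsto \zeta_-(\cdot;c)$ is holomorphic. Testing \eqref{eqn zeta} against $\zeta_-^*$ and exploiting $w_-\in C^4$ and the regularity of $c^{(k)}_+$ (so that $|w_-(s)-c^{(k)}_+|$ vanishes only linearly at the isolated critical layers $s_1,\dots,s_n$), I would establish uniform $H^1$ (indeed $C^1$) bounds on $\zeta_-(\cdot;c)$ as $\cI \downarrow 0^+$, by patching a Frobenius normal-form analysis around each regular root $s_j$ with the elliptic estimates on the complement. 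This yields a continuous boundary trace $\zeta_-(\cdot;c^{(k)}_++i0^+)$ and hence the boundary value of $\dot\zeta_-(0;c)$.

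\textbf{Step 2: Plemelj-Sokhotski evaluation of $\Im\dot\zeta_-(0)$.} The first integral identity \eqref{ODE Phi}, solved between $s=0$ and $s=\log\Rout$ with the boundary condition $\Phi(\log\Rout)=0$, gives exactly
\begin{equation*}
\Im\dot\zeta_-(0;c) = -\cI\int_{0}^{\log\Rout}\frac{\dot\varpi_-(s)\,\abs{\zeta_-(s;c)}^2}{\qty[w_-(s)-\cR]^2+\cI^2}\dd{s}.
\end{equation*}
Recognising the Poisson kernel $\cI/((w_--\cR)^2+\cI^2)$ and invoking \eqref{conver delta}, the rigorous concentration argument (with the uniform $C^1$ bound on $\zeta_-$ from Step 1 and a first-order Taylor expansion of $w_-$ near each $s_j$ legitimised by $w_-\in C^4$) produces
\begin{equation*}
\lim_{\cI\downarrow 0}\Im\dot\zeta_-(0;c^{(k)}_++i\cI) = -\pi\sum_{j=1}^{n}\frac{\dot\varpi_-(s_j)\,\abs{\zeta_-(s_j;c^{(k)}_++i0^+)}^2}{\abs{\dot w_-(s_j)}} =: \Lambda.
\end{equation*}
Under the sign hypothesis $c^{(k)}_+\dot\varpi_-(s_j)\le 0$ with strict inequality at $l\in\{n-1,n\}$, the summands all have the same sign and at least one is non-degenerate, so $\Lambda$ has a definite sign. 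One also needs a quantitative non-degeneracy of $\abs{\zeta_-(s_l;c)}$ at the dominant critical layer, which follows from a Wronskian / shooting argument once the Frobenius model solutions near $s_l$ are built.

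\textbf{Step 3: Inverting the dispersive relation.} From \eqref{eqn c} together with \eqref{h0}, and recalling that $h_{\text{I}}^+(0)>0$ whenever $c^{(k)}_+\neq w_-(0)$, the real-imaginary decomposition reads
\begin{equation*}
\delta = \varepsilon\dot\zeta_-(0;c^{(k)}_++\delta)\cdot h_{\text{I}}^+(0) + \Order{}{\varepsilon\cdot\delta} + \Order{}{\varepsilon^2}.
\end{equation*}
On the half-disc $\mathcal{K}_\varepsilon := \Set*{\delta\in\C \given \abs{\delta}\le C_1\varepsilon,\ \Im\delta\ge c_0\varepsilon}$ with $c_0,C_1>0$ chosen via the bound $|\Lambda|$, the Hölder continuity of $c\mapsto\dot\zeta_-(0;c)$ up to the real segment near $c^{(k)}_+$ (Step 1) and the boundary identification of its imaginary part (Step 2) together imply that the right-hand side maps $\mathcal{K}_\varepsilon$ into itself. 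Brouwer's fixed-point theorem (or a homotopy / winding-number count on $\partial\mathcal{K}_\varepsilon$) then yields a root $\delta(\varepsilon)\in\mathcal{K}_\varepsilon$ with $\Im\delta(\varepsilon) = \varepsilon h_{\text{I}}^+(0)\Lambda + o(\varepsilon)>0$, producing the unstable mode. The same reasoning applies verbatim with $c^{(k)}_+$ replaced by $c^{(k)}_-$, modulo reversing the sign of $h_{\text{I}}^-(0)<0$.

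\textbf{Main obstacle.} The delicate point is Step 2: rigorously justifying the Plemelj-Sokhotski limit \emph{uniformly} in the small imaginary displacement of $c$ from the real axis, while also controlling $\abs{\zeta_-(s_j;c)}$ from above and below independently of $\cI\downarrow 0$. The coefficient $\dot\varpi_-/(w_--c)$ develops a simple pole on the real axis exactly at the critical layers, and the full $C^4$ regularity of $w_-$ is used to build a Frobenius-type local model at each $s_j$ with a remainder small enough that the concentration of the Poisson kernel produces a genuine limit rather than a divergence. Once this analytic core is in place, the rest of the argument is a standard perturbative closure.
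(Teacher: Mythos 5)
Your proposal follows the same strategic arc as the paper's proof: (i) analyze the Rayleigh problem \eqref{eqn zeta} as $\cI\downarrow 0$ near the critical layers, (ii) identify the limit of $\Im\dot{\zeta}_-(0)$ as a Poisson-kernel concentration equal to $-\pi\sum_j \dot{\varpi}_-(s_j)\abs{\zeta_-(s_j)}^2/\abs{\dot{w}_-(s_j)}$ up to sign, and (iii) close with a two-real-parameter fixed-point/degree argument on the dispersive relation \eqref{eqn c} around the bifurcation point. The difference is the technical vehicle for (i)--(ii): the paper never estimates the complex ODE directly, but passes to the real quadratic system \eqref{ode xi} for $(\xi_1,\xi_2,\xi_3,\Phi)=(\abs{\zeta_-}^2,\,\Re(\dot{\zeta}_-\zeta_-^*),\,\abs{\dot{\zeta}_-}^2,\,\Im(\zeta_-\dot{\zeta}_-^*))$, whose $\cI\to 0$ limit \eqref{limit ode} is supplemented by explicit jump conditions \eqref{jump condition} at the critical layers; Lemmas \ref{lem 2}--\ref{lem 4} (imported from B\"uhler--Shatah--Walsh--Zeng) then give $O(\abs{\cI}\log^3\abs{\cI})$ control on $\xi_1$ but only $O(\abs{\cI}^\mu)$ on the remaining components, and the $C^1$-dependence of the limiting data on $\cR$ is what feeds the final implicit-function step. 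Your route via a direct Frobenius analysis of the complex equation is a legitimate alternative and buys a more self-contained presentation, at the cost of redoing the uniform-in-$\cI$ local analysis that the quadratic reformulation packages cleanly.

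Two points need repair. First, Step 1 overclaims regularity: at a simple critical layer the Rayleigh equation has indicial roots $0$ and $1$, and the index-$0$ Frobenius solution carries a $(s-s_j)\log(s-s_j)$ correction, so $\dot{\zeta}_-(\cdot;c)$ is \emph{not} uniformly $C^1$ (indeed not uniformly bounded) near $s_j$ as $\cI\downarrow 0$; only $\zeta_-$ itself --- i.e.\ the component $\xi_1$ --- remains uniformly continuous, which is exactly why the paper isolates $\xi_1$ for the sharper estimate. Your Step 2 only consumes $\abs{\zeta_-(s_j;c)}^2$, so the argument survives once Step 1 is downgraded to uniform $C^0$ control plus $C^1$ control away from the critical set, with the attendant $\abs{\cI}^\mu$ losses. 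Second, in Step 3 the conclusion $\Im\delta>0$ does not follow merely from ``$\Lambda$ has a definite sign'': the Poisson limit carries a factor $\sgn(\cI)$, so one must verify the self-consistency that the sign of $h_{\text{I}}^+(0)\Lambda$ matches the assumed sign of $\cI$, and this is precisely where the hypothesis $c^{(k)}_+\dot{\varpi}_-(s_j)\le 0$ (with strictness at $l=n-1$ or $n$) combines with the sign of $h_{\text{I}}^+(0)$ from \eqref{h0}; make that bookkeeping explicit, as the paper does through the positivity of $c^{\sharp}$.
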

	
	Before preceding to the proof, we first give some preparations on the solvability of ODE boundary value problems near singularities of the coefficients. Regarding \eqref{def Phi}-\eqref{ODE Phi} and \eqref{eqn c}, one may consider the quantities (see also \cite{BuhlerSWZ2016}):
	\begin{equation}\label{def xi}
		\xi_1 \coloneqq \abs{\zeta_-}^2\qc \xi_2 \coloneqq \frac{1}{2}\qty(\dot{\zeta}_-\zeta_-^* +  \dot{\zeta}_-^*\zeta_-) \qc \xi_3 \coloneqq \abs*{\dot{\zeta}_-}^2 \qc \Phi \coloneqq \frac{i}{2}\qty(\zeta_- \dot{\zeta}_-^* - \dot{\zeta}_-\zeta_-^*).
	\end{equation}
	It is clear that these four quantities are all real-valued functions. For the simplicity of notations, we denote by $\vXi \coloneqq (\xi_1, \xi_2, \xi_3)^T$. Then, it follows from \eqref{BP} that (where $\varpi_- \equiv 2w_- + \dot{w}_-$)
	\begin{equation}\label{ode xi}
		\dv{s} \bmqty{\xi_1 \\ \xi_2 \\ \xi_3 \\ \Phi} = \bmqty{2 \xi_2 \\ \qty(k^2 + \frac{\dot{\varpi}_- (w_- - \cR)}{(w_- - \cR)^2 + \cI^2})\xi_1 + \xi_3 \\ 2\qty(k^2 + \frac{\dot{\varpi}_- (w_- - \cR)}{(w_- - \cR)^2 + \cI^2})\xi_2 + \frac{2\cI\dot{\varpi}_-}{(w_- - \cR)^2 + \cI^2} \Phi \\ \frac{\cI \dot{\varpi}_-}{(w_- - \cR)^2 + \cI^2} \xi_1}.
	\end{equation}
	Moreover, due to the construction \eqref{def xi}, there holds the identity
	\begin{equation}
		(\xi_2)^2 + \Phi^2 - \xi_1\xi_3 = 0,
	\end{equation}
	which is also preserved by the evolution equation \eqref{ode xi}.
	
	Since we are considering the (in-)stability of wind-perturbed water waves, we would like to study the behavior of \eqref{ode xi} with small $\cI$ and its limiting process. When $\cI \asymp 0$, the ODE system \eqref{ode xi} would become
	\begin{equation}\label{limit ode}
		\dv{s} \bmqty{\wh{\xi}_1 \\ \wh{\xi}_2 \\ \wh{\xi}_3 \\ \wh{\Phi}} = \bmqty{2 \wh{\xi}_2 \\ \qty(k^2 + \frac{\dot{\varpi}_-}{w_- - \cR})\wh{\xi}_1 + \wh{\xi}_3 \\ 2\qty(k^2 + \frac{\dot{\varpi}_- }{(w_- - \cR)})\wh{\xi}_2 + \frac{2\cI\dot{\varpi}_-}{(w_- - \cR)^2} \wh{\Phi} \\ 0}.
	\end{equation}
	However, the coefficient matrix of \eqref{limit ode} has singularities when $\cR$ belongs to the range of $w_-$. Fortunately, when $c$ is sufficiently close to a regular value of $w_-$, the limiting system \eqref{limit ode} is still solvable in a neighborhood of the corresponding regular point. 
	
	More precisely, assume that $w_- \in C^3$ and $s_0 \in (0, \log\Rout)$ is a regular point of $w_-$, i.e. $\dot{w}_-(s_0) \neq 0$. Then, there is a constant $\delta > 0$, so that
	\begin{equation}\label{delta}
		\frac{1}{2} < \frac{\dot{w}_-(s)}{\dot{w}_-(s_0)} < 2 \qq{for} s_0 - \delta < s < s_0 + \delta.
	\end{equation}
	Suppose that the phase velocity $c$ satisfies $\cR = w_-(s')$ for some $s'$, and there hold
	\begin{equation}\label{s'}
		\abs{s' - s_0} \ll \delta \qc \abs{\cI} \ll \delta.
	\end{equation}
	As the coefficient matrix of \eqref{limit ode} is singular (only) at $s = s'$ for $s \in (s_0-\delta, s_0 + \delta)$, one needs to impose jump conditions on the solutions at $s'$. First observe that, due to the assumptions \eqref{delta}-\eqref{s'}, one may write
	\begin{equation}
		\frac{\dot{\varpi}_-(s)\cI}{[w_-(s)-w_-(s')]^2 + \cI^2} \approx \frac{\dot{\varpi}_-(s')\cI}{\dot{w}_-(s')^2(s-s')^2 + \cI^2} \approx \frac{\dot{\varpi}_-(s')}{\abs{\dot{w}_-(s')}} \cdot \frac{\frac{\abs{\dot{w}_-(s')}}{\cI}}{1 + \qty[\frac{\dot{w}_-(s')}{\cI}(s-s')]^2}.
	\end{equation}
	Particularly, thanks to \eqref{conver delta}, it is natural to impose the jump condition on $\wh{\Phi}$ as:
	\begin{subequations}\label{jump condition}
		\begin{equation}
			\wh{\Phi}(s'+0) - \wh{\Phi}(s' -0) = \sgn(\cI) \cdot \frac{\pi\dot{\varpi}_-(s')\wh{\xi}_1(s')}{\abs{w_-(s')}}.
		\end{equation}
		Similarly, the jump conditions on $\wh{\vXi}$ is supposed to be:
		\begin{equation}
			\wh{\xi}_1(s'+0) = \wh{\xi}_1(s'-0) \qc \wh{\xi}_2(s'+0) = \wh{\xi}_2(s'-0),
		\end{equation}
		and
		\begin{equation}
			\wh{\xi}_3(s'+0)-\wh{\xi}_3(s'-0) = \sgn(\cI)\cdot\frac{\pi\dot{\varpi}_-(s')}{\abs{w_-(s')}}\qty(\wh{\Phi}(s'+0)+\wh{\Phi}(s'-0)).
		\end{equation}
	\end{subequations}
	Concerning the solvability of \eqref{limit ode} with jump conditions \eqref{jump condition}, there holds the following result (cf. \cite[Proposition 4.2]{BuhlerSWZ2016}):
	\begin{lemma}
		Suppose that $0 < \mu < 1$, $0 < \delta' < \delta$ are fixed constants, $0 < \abs{\cI} \ll \delta$, and $(\vXi, \Phi)$ is a solution to \eqref{ode xi} on $[s' - \delta', s' + \delta']$ satisfying the bound
		\begin{equation*}
			\abs{\xi_1(s' + \delta')}^2 + \abs{\xi_2(s' + \delta')}^2 + \abs{\xi_3(s' + \delta')}^2 + \abs{\Phi(s' + \delta')}^2 \le 1.
		\end{equation*}
		Then, the ODE system \eqref{limit ode} together with the jump condition \eqref{jump condition} admits a unique solution with boundary value
		\begin{equation*}
			\qty\big(\vXi, \Phi)(s' + \delta') = \qty(\wh{\vXi}, \wh{\Phi})(s' + \delta').
		\end{equation*}
		Moreover, the solution satisfies the estimates
		\begin{equation*}
			\abs{\xi_1(\sigma) - \wh{\xi}_1(\sigma)} \lesssim \abs{\cI} \cdot \abs{\log^3\abs{\cI}} \qfor \sigma\in[s'-\delta', s' + \delta'],
		\end{equation*}
		and
		\begin{equation*}
			\abs{\vXi(s' - \delta') - \wh{\vXi}(s' - \delta')} \lesssim \abs{\cI}^{\mu} \qc \abs{\Phi(s'-\delta') - \wh{\Phi}(s'-\delta')} \lesssim \abs{\cI}\cdot\abs{\log^3\abs{\cI}}.
		\end{equation*}
		Alternatively, one may assume the pointwise bound of $(\vXi, \Phi)$ at the left end point $s = s'-\delta'$. Then, there hold similar estimate on the right endpoint $s = s'+\delta'$.
	\end{lemma}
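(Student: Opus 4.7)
The plan is to regard \eqref{ode xi} as a regular singular perturbation of the limit system \eqref{limit ode} together with the jump data \eqref{jump condition}, and to match the two solutions by splitting the interval $[s'-\delta', s'+\delta']$ into an outer region $\{\abs{s-s'} \ge \rho\}$, where both ODE systems are smooth and close to each other, and an inner critical layer $\{\abs{s-s'} \le \rho\}$ of width $\rho = \rho(\cI) \to 0$ to be optimized. First I would establish solvability of \eqref{limit ode} together with \eqref{jump condition}: the coefficient matrix of \eqref{limit ode} has only a simple pole at $s=s'$, arising from $\dot\varpi_-/(w_--\cR)$ and $\cI\dot\varpi_-/(w_--\cR)^2$, so a Picard iteration backwards from $s'+\delta'$ yields a unique solution on $(s', s'+\delta']$. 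One checks that $\wh\xi_1,\wh\xi_2$ extend continuously to $s=s'$ (the singular integrand contributes only logarithmically to $\wh\xi_2$), while $\wh\Phi$ is locally constant on each side. The jump conditions \eqref{jump condition} then prescribe unique initial data at $s'-0$, and backward integration continues smoothly to $s'-\delta'$; the alternative left-endpoint version follows by running the same argument in the opposite direction.

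For the outer comparison on $\abs{s-s'}\ge \rho$, observe that
\begin{equation*}
\frac{\dot\varpi_-(w_- - \cR)}{(w_- - \cR)^2 + \cI^2} - \frac{\dot\varpi_-}{w_- - \cR} = -\frac{\dot\varpi_- \cI^2}{(w_- - \cR)\bigl[(w_- - \cR)^2 + \cI^2\bigr]} = O\bigl(\cI^2/\rho^3\bigr),
\end{equation*}
and analogously the discrepancy of the second singular coefficient is $O(\cI^3/\rho^4)$, so a Gronwall argument on $[s'+\rho, s'+\delta']$ yields $\abs{(\vXi,\Phi) - (\wh\vXi,\wh\Phi)}(s'+\rho)\lesssim \cI^2/\rho^3$. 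For the inner analysis on $[s'-\rho, s'+\rho]$ I would integrate \eqref{ode xi} directly: the rescaling $\sigma = \dot w_-(s')(s-s')/\cI$ converts $\cI\dot\varpi_-/[(w_--\cR)^2 + \cI^2]$ into an approximate Lorentzian $[\dot\varpi_-(s')/\abs{\dot w_-(s')}]/(1+\sigma^2)$, whose integral over $\R$ equals $\pi\dot\varpi_-(s')/\abs{\dot w_-(s')}$. Since $\xi_1$ varies by only $O(\rho)$ across the layer (as $\xi_1'=2\xi_2$ with $\xi_2$ bounded by its outer values), one obtains
\begin{equation*}
\int_{s'-\rho}^{s'+\rho} \frac{\cI\dot\varpi_-\xi_1}{(w_- - \cR)^2 + \cI^2}\,\mathrm{d}s = \sgn(\cI)\frac{\pi\dot\varpi_-(s')\xi_1(s')}{\abs{\dot w_-(s')}} + O\bigl(\rho + \cI/\rho\bigr),
\end{equation*}
matching the jump of $\wh\Phi$ in \eqref{jump condition}, and an analogous computation produces the jump of $\wh\xi_3$. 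Matching at $s=s'-\rho$ with the outer estimate on $[s'-\delta', s'-\rho]$ then yields the global bound $\abs{\vXi-\wh\vXi}(s'-\delta')\lesssim \abs{\cI}^\mu$ upon balancing the two losses, e.g.\ $\rho \sim \abs{\cI}^{1-\mu}$.

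To upgrade the estimates for $\xi_1$ and $\Phi$ from $\abs{\cI}^\mu$ to the sharper rate $\abs{\cI}\cdot\abs{\log^3\abs{\cI}}$, I would exploit the partial decoupling of the systems: the equations $\xi_1'=2\xi_2$ and $\Phi'=\cI\dot\varpi_-\xi_1/[(w_--\cR)^2+\cI^2]$ act on $\xi_1$ and $\Phi$ without any singular coefficient multiplying them directly. Integrating the resulting equations for $\xi_1-\wh\xi_1$ and $\Phi-\wh\Phi$ twice against the drivers $\xi_2-\wh\xi_2$ and $\xi_3-\wh\xi_3$, and invoking the Hardy-type bound $\int_\rho^{\delta'}\mathrm{d}s/\abs{w_--\cR}\lesssim\abs{\log\rho}$, one converts the coarser $\abs{\cI}^\mu$-loss for $\xi_2,\xi_3$ into a refined bound for $\xi_1,\Phi$ that gains a full power of $\abs{\cI}$ at the cost of three logarithmic factors. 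The main obstacle will be the quantitative bookkeeping of these error propagations across the three regions while keeping all Gronwall constants uniform as $\cI\to 0$; in particular the inner-layer step requires precise delta-convergence estimates that go beyond the heuristic in \eqref{conver delta}, so that the apparent $O(\rho + \cI/\rho)$ remainder from direct integration can actually be sharpened to $\abs{\cI}\cdot\abs{\log^j\abs{\cI}}$ for the more regular components $\xi_1$ and $\Phi$.
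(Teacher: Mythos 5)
The paper itself does not prove this lemma: it is imported verbatim from the cited reference (\cite[Proposition 4.2]{BuhlerSWZ2016}), so there is no in-paper argument to compare against. Your matched inner/outer strategy is the right general route and is in the spirit of what that reference actually does, but as written the sketch has two genuine gaps. First, the sharp rates $\abs{\cI}\cdot\abs{\log^3\abs{\cI}}$ for $\xi_1$ and $\Phi$ are asserted rather than obtained: your own inner-layer computation leaves a remainder $O(\rho+\cI/\rho)\gtrsim\abs{\cI}^{1/2}$, and the concluding "upgrade" paragraph is programmatic. The actual mechanism is that one must treat the whole integral $\int\frac{\cI\,\dot\varpi_-\,\xi_1}{(w_--\cR)^2+\cI^2}\dd{s}$ over $[s'-\delta',s'+\delta']$ as a single delta-approximation with a quantitative error controlled by the modulus of continuity of $\dot\varpi_-\xi_1$ and $w_-$; each application costs one factor of $\log\abs{\cI}$ (from $\int\frac{\abs{\cI}\abs{t}}{t^2+\cI^2}\dd{t}$), and the three logarithms arise because the Lipschitz constants of $\xi_1,\xi_2$ themselves carry logarithms accumulated through the layer. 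Splitting at a mesoscale $\rho\sim\abs{\cI}^{1-\mu}$ and balancing, as you propose, can only ever give $\abs{\cI}^{\mu}$ for these components, which is why the lemma states the $\mu$-rate for $\xi_2,\xi_3$ but a log-corrected linear rate for $\xi_1,\Phi$ — the distinction is exactly the content you have not supplied.

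Second, the claim that $\wh{\xi}_1,\wh{\xi}_2$ "extend continuously to $s=s'$" is generically false and cannot be the basis for constructing the limit solution. Near a regular critical point the Rayleigh operator has indicial roots $0$ and $1$, so the Frobenius basis contains a $(s-s')\log\abs{s-s'}$ branch; consequently $\wh{\xi}_2'\sim\dot\varpi_-(s')\wh{\xi}_1(s')/[\dot w_-(s')(s-s')]$ and $\wh{\xi}_2$ diverges logarithmically as $s\to s'$ whenever $\dot\varpi_-(s')\wh{\xi}_1(s')\neq 0$ (only $\wh{\xi}_1$ stays bounded). A plain Picard iteration up to $s'$ therefore does not close, and the continuation of $(\wh{\vXi},\wh{\Phi})$ through the critical point has to be defined via the renormalized (finite-part) quantities before the jump conditions \eqref{jump condition} can even be imposed; this is a structural step your outline skips, not a bookkeeping detail.
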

	
	Now, assume that $c_*$ is a regular value of $w_-$. Then, it follows that the preimage of $c_*$ under $w_-$ is a discrete set. Namely, there holds
	\begin{equation}\label{c*}
		(w_-)^{-1}(\{c_*\}) = \{\sigma_1, \dots, \sigma_n\} \subset (0, \log\Rout), \qq{where} \dot{w}_-(\sigma_j) \neq 0 \; (1 \le j \le n).
	\end{equation}
	We may assume that $0 < \sigma_1 < \cdots < \sigma_n < \log\Rout$. Concerning the solutions to \eqref{ode xi} with $c \in \C\setminus\R$ and $\abs{c-c_*} \ll 1$, it holds that (cf. \cite[Proposition 4.3]{BuhlerSWZ2016}):
	\begin{lemma}\label{lem 2}
		Let $0 < \mu < 1$ be a fixed parameter. Then, there exist constants $C, \varepsilon_0, \delta_0$ depending on $\mu, k, \norm{w_-}_{C^3}$, and $\max_{j} \abs{\dot{w}_-(\sigma_j)}^{-1}$, so that
		\begin{equation*}
			\sigma_1 - \delta_0 > 0 \qc \sigma_n = \delta_0 < \log\Rout \qc \sigma_{l}+\delta_0 < \sigma_{l+1} - \delta_0 \; (1 \le l \le n-1),
		\end{equation*}
		and for $c = \cR + i\cI$ with $\abs{c-c_*} + \abs{\cI}<\varepsilon_0$, the following estimates hold. Let $(\vXi, \Phi)$ be the solution to \eqref{ode xi} with boundary value
		\begin{equation*}
			\vXi(\log\Rout) = (0, 0, 1)^T \qc \Phi(\log\Rout) = 0,
		\end{equation*}
		and $\qty(\wh{\vXi}, \wh{\Phi})$ the solution to \eqref{limit ode} for $s \notin (w_-)^{-1}(\{\cR\})$ satisfying the boundary value
		\begin{equation*}
			\wh{\vXi}(\log\Rout) = (0, 0, 1)^T \qc \wh{\Phi}(\log\Rout) = 0,
		\end{equation*}
		and the jump conditions \eqref{jump condition} at all points in $(w_-)^{-1}(\{\cR\})$. Then, one has
		\begin{equation*}
			\abs{\xi_1(\sigma) - \wh{\xi}_1(\sigma)} \le C\abs{\cI}^{\mu} \qfor 0 \le \sigma \le \log\Rout,
		\end{equation*}
		and
		\begin{equation*}
			\abs{\vXi - \wh{\vXi}} + \abs{\Phi - \wh{\Phi}} \le C\abs{\cI}^\mu \qq{on} [0, \log\Rout] \setminus \bigcup_{1 \le j \le n}(\sigma_j - \delta_0, \sigma_j + \delta_0).
		\end{equation*}
	\end{lemma}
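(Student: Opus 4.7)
The plan is to build the global comparison by patching the local estimates from the previous lemma across the finitely many critical points $\sigma_1,\dots,\sigma_n$, interleaved with Grönwall estimates on the regular pieces. First I choose $\delta_0>0$ small enough that the intervals $I_j:=(\sigma_j-\delta_0,\sigma_j+\delta_0)$ are mutually disjoint, contained in $(0,\log\Rout)$, and so that $\dot{w}_-$ stays comparable to $\dot{w}_-(\sigma_j)$ on each $I_j$ (this is the sort of choice made for $\delta$ in the previous lemma). Shrinking $\varepsilon_0$ if necessary, I may assume that whenever $\abs{c-c_*}+\abs{\cI}<\varepsilon_0$, the full preimage $(w_-)^{-1}(\{\cR\})$ consists of exactly $n$ points, one in each $I_j$, with $\abs{s'-\sigma_j}\ll\delta_0$, so that the hypotheses of the previous lemma are in force on each $I_j$.

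Next I propagate from the right endpoint $s=\log\Rout$ leftward. On $[\sigma_n+\delta_0,\log\Rout]$ both systems \eqref{ode xi} and \eqref{limit ode} are regular: the coefficient $\frac{\dot{\varpi}_-(w_--\cR)}{(w_--\cR)^2+\cI^2}$ converges uniformly to $\frac{\dot{\varpi}_-}{w_--\cR}$, while $\frac{\cI\dot{\varpi}_-}{(w_--\cR)^2+\cI^2}$ is $\Order{}{\cI}$, because $\abs{w_--\cR}$ stays bounded below by a constant depending only on $\delta_0$ and $\min_j\abs{\dot{w}_-(\sigma_j)}$. Since the boundary data $(0,0,1)^T,0$ agree, a standard Grönwall argument yields $\abs{\vXi-\wh{\vXi}}+\abs{\Phi-\wh{\Phi}}\lesssim\abs{\cI}$ on $[\sigma_n+\delta_0,\log\Rout]$, in particular at $s=\sigma_n+\delta_0$. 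Crucially, this also gives a uniform a~priori bound on $(\vXi,\Phi)(\sigma_n+\delta_0)$ that verifies the size hypothesis of the previous lemma at that endpoint.

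I then apply the previous lemma on $I_n$ to transport $(\vXi,\Phi)$ through the singular point in $I_n$: on the left boundary $s=\sigma_n-\delta_0$ this produces $\abs{\vXi-\wh{\vXi}}+\abs{\Phi-\wh{\Phi}}\lesssim\abs{\cI}^\mu$, and the sharper bound $\abs{\xi_1-\wh{\xi}_1}\lesssim\abs{\cI}\abs{\log^3\abs{\cI}}$ throughout $I_n$ (using that $\xi_1$ satisfies $\dot{\xi}_1=2\xi_2$ and is therefore continuous across the singular point, with no jump in \eqref{jump condition}). Now I iterate: on $[\sigma_{n-1}+\delta_0,\sigma_n-\delta_0]$ Grönwall propagates the $\abs{\cI}^\mu$-closeness with at worst a multiplicative constant (depending on $k$, $\norm{w_-}_{C^3}$, and $\delta_0$), preserving both the error order and the size hypothesis; then one more application of the previous lemma carries the estimate through $I_{n-1}$, and so on, until one reaches $s=0$ after at most $n$ iterations. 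At each iteration, the $\xi_1$-estimate remains of order $\abs{\cI}\abs{\log^3\abs{\cI}}\le\abs{\cI}^\mu$ because $\xi_1$ inherits the better bound from the previous lemma on $I_j$ and from Grönwall with a smooth right-hand side $2\xi_2$ in between.

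The main obstacle is bookkeeping: the previous lemma requires a pointwise bound on the solution at the endpoint from which one starts, and I must verify that the iteration does not blow up the size of $(\vXi,\Phi)$ as we pass successively through $\sigma_n,\sigma_{n-1},\dots,\sigma_1$. This is handled by noting that the limiting system \eqref{limit ode} together with \eqref{jump condition} is a fixed problem independent of $\cI$, so $\qty(\wh{\vXi},\wh{\Phi})$ is bounded on $[0,\log\Rout]$ by a constant depending only on $k$, $\norm{w_-}_{C^3}$, $\max_j\abs{\dot{w}_-(\sigma_j)}^{-1}$, and $\Rout$; the $\abs{\cI}^\mu$ closeness then automatically keeps $(\vXi,\Phi)$ uniformly bounded for $\abs{\cI}<\varepsilon_0$ with $\varepsilon_0$ small. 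Once the iteration terminates at $s=0$, the claimed estimates on $[0,\log\Rout]\setminus\bigcup_j I_j$ and on $\xi_1$ everywhere follow by combining the Grönwall estimates on the regular intervals with the pointwise $\xi_1$-estimate provided by the previous lemma on each $I_j$.
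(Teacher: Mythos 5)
The paper does not actually prove this lemma: it is stated with the citation ``cf.\ [Proposition 4.3] of B\"uhler--Shatah--Walsh--Zeng'' and no argument is given, so there is no in-paper proof to compare against. Your strategy --- choose $\delta_0$ so the intervals $I_j=(\sigma_j-\delta_0,\sigma_j+\delta_0)$ are disjoint and isolate the perturbed critical points, then alternate Gr\"onwall estimates on the regular pieces (where the coefficient discrepancy between \eqref{ode xi} and \eqref{limit ode} is $\order{\abs{\cI}}$ because $\abs{w_--\cR}$ is bounded below) with the local transmission lemma on each $I_j$ --- is exactly the standard patching argument that the cited reference uses, and the bookkeeping you do (a priori boundedness of $(\wh{\vXi},\wh{\Phi})$ from the $\cI$-independent limiting problem, hence of $(\vXi,\Phi)$ for small $\cI$; finitely many iterations so constants multiply only $n$ times; $\abs{\cI}\abs{\log^3\abs{\cI}}\le C\abs{\cI}^\mu$) is the right bookkeeping. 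You should also record the compactness argument showing that for $\abs{c-c_*}<\varepsilon_0$ the set $(w_-)^{-1}(\{\cR\})$ contains no points outside $\bigcup_j I_j$ (outside small neighborhoods of the $\sigma_j$ the continuous function $\abs{w_--c_*}$ has a positive minimum), but that is routine.

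The one step that needs repair is the iterated application of the local lemma. As stated, that lemma compares $(\vXi,\Phi)$ with the solution of \eqref{limit ode}--\eqref{jump condition} whose data at $s'+\delta'$ \emph{coincide} with those of $(\vXi,\Phi)$. After the first regular interval the reference solution $(\wh{\vXi},\wh{\Phi})$ you want to compare against no longer has matching data at $\sigma_{n}+\delta_0$ (they differ by $\order{\abs{\cI}}$), and after passing each $I_j$ the mismatch grows to $\order{\abs{\cI}^\mu}$. So on each $I_j$ you must introduce an auxiliary solution of the limiting problem with data equal to $(\vXi,\Phi)(\sigma_j+\delta_0)$, apply the local lemma to that pair, and then control the difference between the auxiliary solution and $(\wh{\vXi},\wh{\Phi})$ across $I_j$ by continuous dependence of \eqref{limit ode}--\eqref{jump condition} on its boundary data --- which holds with a constant depending only on $k$, $\norm{w_-}_{C^3}$ and $\max_j\abs{\dot{w}_-(\sigma_j)}^{-1}$, since the limiting problem is linear and its jump conditions are linear in $(\wh{\vXi},\wh{\Phi})$. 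With that triangle-inequality step inserted at each singular interval, the induction closes and the proof is complete; without it, the chain of comparisons does not literally connect.
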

	In other words, for arbitrarily small $\cI$, solutions to \eqref{ode xi} are well-behaved away from the singular regions. Notice further that, when $c = \cR + i\cI$ is fixed, the singular issues of \eqref{limit ode} actually emerge near the points
	\begin{equation}
		(w_-)^{-1}(\{\cR\}) \eqqcolon \{\sigma_1' < \cdots < \sigma_n'\}.
	\end{equation}
	Indeed, if $w_-$ is sufficiently smooth, there holds the following refined result (cf. \cite[\S4.4]{BuhlerSWZ2016}):
	
	
	\begin{lemma}\label{lem 4}
		Assume that $w_- \in C^4$, $\dot{\varpi}_-(\sigma_j) \equiv [2\dot{w}_-(\sigma_j) + \ddot{w}_-(\sigma_j)]$ are all non-positive (or non-negative) and $\dot{\varpi}_-(\sigma_{l}) \neq 0$ for $l = n$ or $(n-1)$, here $\sigma_j$ are the points defined in \eqref{c*}. Then, for all $\cR$ close to $c_*$, the system \eqref{limit ode} admits a unique solution $(\wt{\vXi}, \wt{\Phi})$ satisfying the jump condition \eqref{jump condition} at all $\sigma_j'$, the boundary condition
		\begin{equation*}
			\wt{\xi}_1(0) = 1 \qc \wt{\xi}_1 (\log\Rout) = \wt{\xi}_2(\log\Rout) = \wt{\Phi}(\log\Rout) = 0,
		\end{equation*}
		and the relations $\wt{\xi}_1(\sigma_j') \neq 0 \; (1 \le j \le n)$,
		\begin{equation*}
			\wt{\Phi}(0) = - \sgn(\cI)\pi\sum_{1\le j \le n} \frac{\dot{\varpi}_-(\sigma_j')\wt{\xi}_1(\sigma_j')}{\abs*{\dot{w}_-(\sigma_j')}} \neq 0.
		\end{equation*}
		Moreover, when $\cR$ is viewed as a parameter, $(\wt{\vXi}, \wt{\Phi})_{\restriction s = 0}$ is $C^1$ with respect to $\cR$.
		
		On the other hand, if $c$ is sufficiently close to $c_*$ and $\abs{\cI}>0$, the boundary value problem \eqref{eqn zeta} admits a unique solution, which corresponds to a unique solution to \eqref{ode xi} satisfying
		\begin{equation*}
			\xi_1(0)=1 \qc \xi_1(\log\Rout)=\xi_2(\log\Rout)=\Phi(\log\Rout)=0 \qc \xi_3(\log\Rout)>0,
		\end{equation*}
		\begin{equation*}
			\dot{\zeta}_-(0) = \xi_2(0) + i \Phi(0),
		\end{equation*}
		and the error estimate (here $0 < \mu < 1$ is an arbitrary but fixed parameter, and the implicit constant depends on $\mu$):
		\begin{equation*}
			\abs{\vXi-\wt{\vXi}} + \abs{\Phi - \wt{\Phi}} \lesssim \abs{\cI}^{\mu}.
		\end{equation*}
	\end{lemma}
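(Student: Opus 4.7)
The plan is to follow the shooting and matched-asymptotic strategy of \cite[Propositions 4.2--4.3]{BuhlerSWZ2016}, adapted to our singular Rayleigh equation on $[0,\log\Rout]$. For the first claim I construct $(\wt{\vXi},\wt{\Phi})$ by backward shooting from $s=\log\Rout$: on each open sub-interval of $[0,\log\Rout]\setminus\{\sigma_j'\}_{j=1}^n$, \eqref{limit ode} is a regular linear ODE, so the initial data $\wt{\xi}_1=\wt{\xi}_2=\wt{\Phi}=0$, $\wt{\xi}_3=\tau$ at $\log\Rout$ integrates backward uniquely, and across each critical layer I apply \eqref{jump condition}. This yields a one-parameter family linear in $\tau$. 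To see that $\tau$ can be normalized so that $\wt{\xi}_1(0)=1$, I pass to the underlying Rayleigh equation $-\ddot{\wt\zeta}+\qty(k^2+\dot\varpi_-/(w_--\cR))\wt\zeta=0$, for which $\wt\xi_1=\abs{\wt\zeta}^2$, $\wt\xi_2=\Re\qty(\dot{\wt\zeta}\,\wt\zeta^*)$, $\wt\xi_3=\abs{\dot{\wt\zeta}}^2$, $\wt\Phi=\Im\qty(\wt\zeta\,\dot{\wt\zeta}^*)$. At each $\sigma_j'$, Frobenius analysis yields two local solutions $\wt\zeta_1\sim(s-\sigma_j')$ and $\wt\zeta_2\sim 1+\qty(\dot\varpi_-(\sigma_j')/\dot w_-(\sigma_j'))(s-\sigma_j')\log\abs{s-\sigma_j'}+O(s-\sigma_j')$, and the $\cI\to 0^{\pm}$ limit of the corresponding $\cI\neq 0$ BVP selects the log branch with a definite $i\pi$ prescription---which is precisely \eqref{jump condition}. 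Unique solvability of the matched limit BVP for $\cR$ near $c_*$ then follows from Fredholm theory, once I verify that its homogeneous version has only the trivial solution; this I do via a Wronskian computation combined with the preceding local lemma to trace how the two Frobenius branches propagate across each $\sigma_j'$.

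The identity $\wt\Phi(0)=-\sgn(\cI)\pi\sum_j\dot\varpi_-(\sigma_j')\wt\xi_1(\sigma_j')/\abs{\dot w_-(\sigma_j')}$ follows by accumulating the jumps of $\wt\Phi$ from $\wt\Phi(\log\Rout)=0$ backward to $s=0$. Since $\wt\xi_1=\abs{\wt\zeta}^2\ge 0$ and all $\dot\varpi_-(\sigma_j')$ share a common sign by hypothesis, the sum vanishes only if every term does, i.e.\ only if for each $j$ either $\dot\varpi_-(\sigma_j')=0$ or $\wt\zeta(\sigma_j')=0$. The assumption $\dot\varpi_-(\sigma_l)\neq 0$ for $l\in\{n-1,n\}$ eliminates the first option at index $l$, and a Frobenius argument on the rightmost sub-interval---using that $\wt\zeta$ is the backward continuation of the datum $\wt\zeta(\log\Rout)=0$, $\dot{\wt\zeta}(\log\Rout)\neq 0$ and thus cannot coincide locally with the smooth branch $\wt\zeta_1$ for $\cR$ close to $c_*$---rules out $\wt\zeta(\sigma_l')=0$. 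The $C^1$ dependence of $(\wt{\vXi},\wt{\Phi})_{\restriction s=0}$ on $\cR$ then follows from standard parameter dependence for linear ODEs on each regular sub-interval together with the smoothness of the jump coefficients in \eqref{jump condition}, after invoking the implicit function theorem to track $\sigma_j'$ smoothly in $\cR$ (using $\dot w_-(\sigma_j)\neq 0$).

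The second assertion is obtained by patching Lemma \ref{lem 2} with the preceding local lemma. For $\abs{\cI}>0$, the coefficient $w_--c$ has no zero on $[0,\log\Rout]$, so \eqref{eqn zeta} is a regular linear BVP; unique solvability for $c$ close to $c_*$ follows from Fredholm theory once the limit problem is known to be non-degenerate (by the first half). The specified boundary data for $(\vXi,\Phi)$ are immediate from \eqref{def xi}, and the pointwise error $\abs{\vXi-\wt{\vXi}}+\abs{\Phi-\wt{\Phi}}\lesssim\abs{\cI}^{\mu}$ is assembled by iterating Lemma \ref{lem 2} on each regular sub-interval and the preceding local lemma on each singular neighborhood $(\sigma_j'-\delta_0,\sigma_j'+\delta_0)$. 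The hardest step I expect is the non-vanishing of $\wt\xi_1$ at the critical layer that carries a signed jump: because $\cR$ sweeps a full real neighborhood of $c_*$, purely generic arguments do not apply, and one must show that the Frobenius log-branch appears with strictly non-zero weight in $\wt\zeta$ along the backward shoot. This is exactly where the sign hypothesis on $\dot\varpi_-$ enters essentially.
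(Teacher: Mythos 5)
The paper itself gives no proof of Lemma \ref{lem 4}: it is stated with a pointer to \cite[\S4.4]{BuhlerSWZ2016}, so your outline is effectively being measured against that reference. Your architecture matches it: backward integration of the linear system \eqref{limit ode} on the regular sub-intervals, Frobenius analysis at each simple critical layer (indicial roots $0$ and $1$, with the second branch $\wt{\zeta}_2\sim 1+\kappa(s-\sigma_j')\log\abs{s-\sigma_j'}$), identification of the $\sgn(\cI)\,i\pi$ branch of the logarithm as the source of \eqref{jump condition}, piecewise constancy of $\wh{\Phi}$ plus accumulation of jumps to get the formula for $\wt{\Phi}(0)$, the common sign of $\dot{\varpi}_-(\sigma_j')$ together with $\wt{\xi}_1=\abs{\wt{\zeta}}^2\ge 0$ to prevent cancellation, and Lemma \ref{lem 2} patched with the local lemma to obtain the $\abs{\cI}^{\mu}$ error bound and the regular solvability of \eqref{eqn zeta} for $\abs{\cI}>0$. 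One correction of framing: the limit problem is an initial-value shoot from $s=\log\Rout$ with prescribed jumps, so uniqueness is immediate from linearity and no Fredholm alternative is involved; the only solvability issue is whether the shoot can be normalized to $\wt{\xi}_1(0)=1$, i.e. whether $\wt{\xi}_1(0)\neq 0$, which your sketch does not address.

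The genuine gap is the one you flag but do not close: the non-vanishing statements $\wt{\xi}_1(\sigma_j')\neq 0$ (and $\wt{\xi}_1(0)\neq 0$). Your argument for the rightmost layer --- that $\wt{\zeta}$ is the backward continuation of the data $\wt{\zeta}(\log\Rout)=0$, $\dot{\wt{\zeta}}(\log\Rout)\neq 0$ and \emph{therefore} cannot agree locally with the analytic Frobenius branch --- does not follow: a nontrivial solution of a second-order linear ODE can vanish at interior points, and whether the log-carrying branch enters $\wt{\zeta}$ at $\sigma_n'$ with nonzero weight is exactly the assertion $\wt{\zeta}(\sigma_n')\neq 0$, not a consequence of nontriviality of the terminal data. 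The hypothesis that $\dot{\varpi}_-$ be nonzero at $\sigma_{n-1}$ \emph{or} $\sigma_n$ signals the intended mechanism: if $\wt{\zeta}(\sigma_n')=0$, the solution crosses $\sigma_n'$ along the analytic branch, stays real, and produces no jump in $\wh{\Phi}$; one must then exclude the simultaneous vanishing $\wt{\zeta}(\sigma_{n-1}')=0$ (e.g. by a Wronskian or oscillation argument on $(\sigma_{n-1}',\sigma_n')$ exploiting the sign of $\dot{\varpi}_-/(w_--\cR)$), and similarly propagate non-degeneracy down to $s=0$. As written, your proposal asserts the conclusion at precisely the step where the sign hypothesis must do its work, so the implication leading to $\wt{\Phi}(0)\neq 0$ --- the entire point of the lemma for Theorem \ref{thm instability} --- is not established.
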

	
	Now, with the help of preceding lemmas, we turn to the proof of Theorem \ref{thm instability}.
	
	\begin{proof}[Proof of Theorem \ref{thm instability}]
		Let $(\wh{\vXi}^{\sharp}, \wh{\Phi}^{\sharp}) $ be the solution to \eqref{limit ode} with parameter $\cR \coloneqq c^{(k)}_+$, where $c^{(k)}_+$ is given by \eqref{def ck}. Denote by
		\begin{equation}
			c^{\sharp} \coloneqq - \pi h_{\text{I}}^+(0) \sum_{1\le j \le n} \frac{\dot{\varpi}_-(s_j) \wh{\xi}^\sharp_1(s_j)}{\abs{\dot{w_-}(s_j)}},
		\end{equation}
		where $h_\text{I}^+$ is the function in \eqref{eqn c} and $h_{\text{I}}^+(0)$ is given by \eqref{h0}. It follows from Lemma \ref{lem 4} that $c^\sharp > 0$. Define two complex-valued functions by:
		\begin{equation}
			\begin{split}
				\Lambda_1 (\nu_1, \varepsilon) &\coloneqq c^{(k)}_+ + \nu_1 - h_{\text{R}}^\pm\qty\Big(\varepsilon\Re{\dot{\zeta}_-(0)}, \varepsilon\Im{\dot{\zeta}_-(0)}, \varepsilon), \\
				\Lambda_2 (\nu_2, \varepsilon) &\coloneqq c^\sharp + \nu_2 - i\Im{\dot{\zeta}_-(0)}\cdot h_{\text{I}}^\pm\qty\Big(\varepsilon\Re{\dot{\zeta}_-(0)}, \varepsilon\Im{\dot{\zeta}_-(0)}, \varepsilon),
			\end{split}
		\end{equation}
		where $h_{\text{R}}^+$ and $h_\text{I}^+$ are functions given in \eqref{eqn c}. Now, consider the solution to ODE boundary value problem \eqref{eqn zeta} with parameter
		\begin{equation}\label{c thm}
			c \coloneqq \underbrace{\qty(c^{(k)}_+ + \nu_1)}_{\cR} + i\underbrace{\varepsilon\qty(c^\sharp + \nu_2)}_{\cI}.
		\end{equation}
		It is clear that, the dispersive relation \eqref{dispersion} is satisfied iff $\Lambda_1 = \Lambda_2 = 0$. Thus, it only remains to show that the map $\Uplambda \coloneqq (\Lambda_1, \Lambda_2)$ has a zero point near $(\nu_1, \nu_2) = (0, 0)$.
		
		Since $\Lambda_2$ is smooth on the region $\nu_2 > -c^\sharp$, Lemma \ref{lem 4} implies that the boundary value problem \eqref{eqn zeta} is uniquely solvable for sufficiently small $\nu_1$ and $\varepsilon$. Moreover, there holds
		\begin{equation*}
			\dot{\zeta}_-(0) = \xi_2(0) + i \Phi(0).
		\end{equation*}
		
		Now, consider the problem \eqref{limit ode} and \eqref{jump condition} with parameter $\cR$ given by \eqref{c thm}. As long as $\abs{\nu_1} \ll 1$, Lemma \ref{lem 4} yields the existence of the unique solution $(\wt{\vXi}, \wt{\Phi})$. Moreover, for a fixed constant $0 < \mu < 1$, there holds
		\begin{equation*}
			\dot{\zeta}_-(0) = \xi_2(0) + i\Phi(0) = \wt{\xi}_2(0) + i\wt{\Phi}(0) + \order{\varepsilon^\mu}.
		\end{equation*}
		Due to the $C^1$-dependence of $(\wt{\vXi}, \wt{\Phi})$ on $\nu_1$, it follows that
		\begin{equation*}
			\begin{split}
				\dot{\zeta}_-(0) &= \wt{\xi}_2(0) + i\wt{\Phi}(0) + \order{\varepsilon^\mu} \\
				&= \wh{\xi}^\sharp_2(0) + i\wh{\Phi}^\sharp(0) + \order{\varepsilon^\mu} + \order{\abs{\nu_1}} \\
				&= \wh{\xi}^\sharp_2(0) - i\pi\sum_{1\le j \le n}  \frac{\dot{\varpi}_-(s_j) \wh{\xi}^\sharp_1(s_j)}{\abs{\dot{w_-}(s_j)}}  + \order{\varepsilon^\mu} + \order{\abs{\nu_1}}.
			\end{split}
		\end{equation*}
		
		On the other hand, \eqref{h0} and the analyticity of $h_\text{R, I}^+$ yield that
		\begin{equation*}
			\Lambda_1 = \nu_1 + \order{\varepsilon} \qand \Lambda_2 = \nu_2 + \order{\varepsilon^\mu} + \order{\abs{\nu_1}}.
		\end{equation*}
		Thus, for each fixed $\varepsilon \ll 1$, $\Uplambda = \Uplambda(\nu_1, \nu_2, \varepsilon)$ admits a zero point near $(\nu_1, \nu_2) = (0, 0)$, which concludes the proof. The arguments for $c^{(k)}_-$ are the same.
	\end{proof}
	
	\subsection{Instability for Non-smooth Wind Profiles}\label{sec other instability}
	
	Finally, we present an example, for which the critical layer is away from the support of $\dot{\varpi}_-$. The construction is motivated from \cite[\S5]{BuhlerSWZ2016}.
	
	\begin{ex}[Constant inner vortices and piecewise-constant outer vortices]
		For simplicity, suppose now that $\cU_*^+$ is the unit disk and $\cU^-_* = \R^2\setminus\overline{\cU}_*^+$ (i.e., $\Rin = 0$ and $\Rout = \infty$). Assume further that background profiles are given as
		\begin{equation*}
			w_+(s) = B \qfor 0\le s \le 1, \qand w_-(s) = \begin{cases*}
				\omega_*[1-\exp(-2s)] + b\exp(-2s) &for $0 \le s < s_*$, \\
				\qty{\omega_*[\exp(2s_*)-1] + b}\exp(-2s) &for $s \ge s_*$,
			\end{cases*}
		\end{equation*}
		where $B$, $b$, $\omega_*$, and $s_*$ are all fixed constants, whose values/ranges will be determined later. 
		
		First, it is clear that
		\begin{equation*}
			\varpi_+(s) \equiv 2B \qfor 0<s < 1, \qand
			\varpi_-(s) = \begin{cases*}
				2\omega_* &for $0 < s < s_*$, \\
				0 &for $s>s_*$.
			\end{cases*}
		\end{equation*}
		Particularly, there holds
		\begin{equation*}
			\dv{s}\varpi_- = -2\omega_*\vdelta_{s_*},
		\end{equation*}
		where $\vdelta_{s_*}$ is the Dirac mass centered at $s_*$. It is easy to solve \eqref{BP} for $\zeta_+$ that
		\begin{equation*}
			\zeta_+(s) = \exp(\abs{k}s) \implies \dot{\zeta}_+(0) = \ak.
		\end{equation*}
		Similarly, since $\dot{\varpi}_-$ is a Dirac measure, the solution $\zeta_-$ can be given as
		\begin{equation*}
			\zeta_-(s) = \begin{cases*}
				A_1 \exp(\ak s) + A_2 \exp(-\ak s) &for $0 < s < s_*$,\\
				A_3 \exp(-\ak s) &for $s > s_*$,
			\end{cases*}
		\end{equation*}
		where $A_j \; (1 \le j \le 3)$ are constants so that
		\begin{equation*}
			\zeta_-(0) = 1 \qc \zeta_-(s_*-0) = \zeta_-(s_* + 0), \qand \dot{\zeta}_-(s_*+0) - \dot{\zeta}_-(s_*-0) = \frac{-2\omega_* \zeta_-(s_*)}{w_-(s_*) - c}.
		\end{equation*}
		Namely, $A_j \; (1 \le j \le 3)$ solve the linear algebraic equations:
		\begin{align*}
			A_1 + A_2 &= 1, \\
			A_1 \exp(\ak s_*) + A_2 \exp(-\ak s_*) &= A_3 \exp(-\ak s_*), \\
			\ak A_1 \exp(\ak s_*) - \ak A_2 \exp(-\ak s_*) &= -\ak A_3 \exp(-\ak s_*) + \frac{2\omega_*}{w_-(s_*) - c} A_3 \exp(-\ak s_*).
		\end{align*}
		Thus, it routine to calculate that
		\begin{equation*}
			\begin{split}
				\dot{\zeta}_-(0) &= \ak (A_1 - A_2) = - \ak \cdot \frac{c-\qty[w_-(s_*) - \frac{\omega_*}{\ak}\qty(1+e^{-2\ak s_*})]}{c - \qty[w_-(s_*) - \frac{\omega_*}{\ak}\qty(1-e^{-2\ak s_*})]} \\
				&= - \ak \cdot \frac{c-\qty[\omega_*\qty(1-e^{-2s_*}) + be^{-2s_*} - \frac{\omega_*}{\ak}\qty(1+e^{-2\ak s_*})]}{c - \qty[\omega_*\qty(1-e^{-2s_*}) + be^{-2s_*} - \frac{\omega_*}{\ak}\qty(1-e^{-2\ak s_*})]} \\
				&\eqqcolon -\ak \cdot \frac{c-\gamma_1}{c-\gamma_2}.
			\end{split}
		\end{equation*}
		Specifically, the dispersive relation \eqref{dispersion} now reads that
		\begin{equation}\label{ex disp}
			\begin{split}
				&\ak \qty[c-\qty(1-\frac{1}{\ak})B]^2 - \qty(1-\frac{1}{\ak})\qty[\frac{\alpha}{\varrho_+}\ak(\ak+1) - B^2] + \\
				&\quad + \varepsilon \qty[\ak\cdot \frac{c-\gamma_1}{c-\gamma_2}(c-b)^2 - 2\omega_*(c-b) - b^2] = 0.
			\end{split}
		\end{equation}
		Let $\ak \ge 2$ be fixed and the constant $B \ge 0$ satisfy
		\begin{equation*}
			B^2 < \frac{\alpha}{\varrho_+}\ak(\ak+1).
		\end{equation*}
		Then, when $\varepsilon = 0$, the algebraic equation \eqref{ex disp} for $c$ admits two distinct real roots:
		\begin{equation}
			\lambda_\pm^{(k)} = \qty(1-\frac{1}{\ak})B \pm \sqrt{\frac{\ak - 1}{k^2}\qty[\frac{\alpha}{\varrho_+}\ak(\ak+1) - B^2]}.
		\end{equation}
		Note that
		\begin{equation*}
			\begin{split}
				\gamma_2 &= \omega_*\qty(1-e^{-2s_*}) + be^{-2s_*} - \frac{\omega_*}{\ak}\qty(1-e^{-2\ak s_*}) \\
				&= \qty(1-\frac{1}{\ak})\omega_*\qty(1-e^{-2s_*}) + \qty[be^{-2s_*}- \frac{\omega_*}{\ak}\qty(e^{-2s_*} - e^{-2\ak s_*})].
			\end{split}
		\end{equation*}
		Thus, for well-chosen $\omega_*$ and $b$ depending on $\ak$ and $\lambda_+^{(k)}$, one can take a fixed position $s_* > 0$ so that
		\begin{equation}
			\gamma_2 = \lambda_+^{(k)}.
		\end{equation}
		Whence, by defining a function
		\begin{equation*}
			\begin{split}
				F(c, \varepsilon) &\coloneqq \qty(c-\lambda^{(k)}_-)\qty(c-\lambda^{(k)}_+)^2 + \\
				&\qquad + \varepsilon\qty[(c-\gamma_1)(c-b)^2 - 2\ak^{-1}\omega_*(c-b)\qty(c-\lambda^{(k)}_+) - \ak^{-1}b^2\qty(c-\lambda^{(k)}_+)],
			\end{split}
		\end{equation*}
		the dispersive relation \eqref{ex disp} can be rewritten as
		\begin{equation}\label{ex disp 2}
			F(c, \varepsilon) = 0.
		\end{equation}
		It is routine to check that
		\begin{gather*}
			F\qty(\lambda_+^{(k)}, 0) = (\pd_c F)\qty(\lambda_+^{(k)}, 0) = 0, \\
			(\pd_c\pd_c F)\qty(\lambda_+^{(k)}, 0) = 4 \qty(\lambda^{(k)}_+ - \lambda^{(k)}_-) > 0, \\
			(\pd_\varepsilon F)\qty(\lambda_+^{(k)}, 0) = (\gamma_2 - \gamma_1)\qty(\lambda_+^{(k)}-b)^2 > 0, \qq{whenever} b \neq \lambda_+^{(k)}.
		\end{gather*}
		Therefore, for each fixed $\varepsilon \ll 1$, the algebraic equation \eqref{ex disp 2} for $c$ admit two conjugate non-real roots, say, $\lambda_{\text{R}} \pm i \lambda_{\text{I}}$, for which $\lambda_{\text{R}}, \lambda_{\text{I}} \in \R$, $\lambda_{\text{I}} > 0$, and there hold
		\begin{equation*}
			\lambda_{\text{R}} = \lambda^{(k)}_+ + \order{\varepsilon^{\frac{1}{2}}} \qc \lambda_{\text{I}} = \order{\varepsilon^{\frac{1}{2}}}.
		\end{equation*}
		On the other hand, it follows that
		\begin{equation*}
			w_-(s_*) = \omega_*\qty(1-e^{-2s_*}) + be^{-2s_*} = \lambda_+^{(k)} + \frac{\omega_*}{\ak}\qty(1-e^{-2\ak s_*}),
		\end{equation*}
		which indicates that the critical layer is away form $\spt(\dot{\varpi}_-) = \{s=s_*\}$. Here we remark that the wind profile $w_-$ is piecewise smooth but only globally Lipschitz. Namely, the regularity of wind profile is crucial for relations among the instability, critical layers, and $\spt(\dot{\varpi}_-)$.
	\end{ex}

	\section*{Acknowledgments}
	The research of Changfeng Gui is supported by 
	University of Macau research grants CPG2024-00016-FST, CPG2025-00032-FST, SRG2023-00011-FST, MYRGGRG2023-00139-FST-UMDF, UMDF Professorial Fellowship of Mathematics, Macao SAR FDCT 0003/2023/RIA1 and Macao SAR FDCT 0024/2023/RIB1. The research of Sicheng Liu  is supported by the UM Postdoctoral Fellow (UMPF) scheme under the UM Talent Programme at the University of Macau.
	
	
	\subsection*{Data availability}
	This manuscript has no associated data.
	
	\subsection*{Conflict of interest}
	The authors have no conflict of interest to disclose.
	
	\fancyhead[RO,LE]{\sc{\sffamily References}}
	\bibliographystyle{amsplain0}
	{\small\bibliography{ref}}

\providecommand{\bysame}{\leavevmode\hbox to3em{\hrulefill}\thinspace}
\providecommand{\MR}{\relax\ifhmode\unskip\space\fi MR }
\providecommand{\MRhref}[2]{%
  \href{http://www.ams.org/mathscinet-getitem?mr=#1}{#2}
}
\providecommand{\href}[2]{#2}
\begin{thebibliography}{10}

\bibitem{BHL93CPAM}
J.~Thomas Beale, Thomas~Y. Hou, and John~S. Lowengrub, \emph{Growth rates for
  the linearized motion of fluid interfaces away from equilibrium}, Comm. Pure
  Appl. Math. \textbf{46} (1993), 1269--1301.

\bibitem{BR13Angew}
Didier Bresch and Michael Renardy, \emph{{K}elvin-{H}elmholtz instability with
  a free surface}, Z. Angew. Math. Phys. \textbf{64} (2013), 905--915.

\bibitem{BuhlerSWZ2016}
Oliver B\"{u}hler, Jalal Shatah, Samuel Walsh, and Chongchun Zeng, \emph{On the
  wind generation of water waves}, Arch. Ration. Mech. Anal. \textbf{222}
  (2016), 827--878.

\bibitem{CCS08CPAM}
Ching-Hsiao~Arthur Cheng, Daniel Coutand, and Steve Shkoller, \emph{On the
  motion of vortex sheets with surface tension in three-dimensional {E}uler
  equations with vorticity}, Comm. Pure Appl. Math. \textbf{61} (2008),
  1715--1752.

\bibitem{DR04book}
P.~G. Drazin and W.~H. Reid, \emph{Hydrodynamic stability}, second ed.,
  Cambridge Mathematical Library, Cambridge University Press, Cambridge, 2004,
  With a foreword by John Miles.

\bibitem{Ebin1987}
David~G. Ebin, \emph{The equations of motion of a perfect fluid with free
  boundary are not well posed}, Comm. Partial Differential Equations
  \textbf{12} (1987), 1175--1201.

\bibitem{Ebin88CPDE}
David~G. Ebin, \emph{Ill-posedness of the {R}ayleigh-{T}aylor and {H}elmholtz
  problems for incompressible fluids}, Comm. Partial Differential Equations
  \textbf{13} (1988), 1265--1295.

\bibitem{Gal19arxiv}
Thierry Gallay, \emph{Stability of vortices in ideal fluids: the legacy of
  {K}elvin and {R}ayleigh}, arXiv preprint (2019), arXiv:1901.02815.

\bibitem{Howard1961}
Louis~N. Howard, \emph{Note on a paper of {J}ohn {W}. {M}iles}, J. Fluid Mech.
  \textbf{10} (1961), 509--512.

\bibitem{HL08CMP}
Vera~Mikyoung Hur and Zhiwu Lin, \emph{Unstable surface waves in running
  water}, Comm. Math. Phys. \textbf{282} (2008), 733--796.

\bibitem{HL08err}
Vera~Mikyoung Hur and Zhiwu Lin, \emph{Erratum to: {U}nstable surface waves in
  running water [{MR}2426143]}, Comm. Math. Phys. \textbf{318} (2013),
  857--861.

\bibitem{LL87book}
L.~D. Landau and E.~M. Lifshitz, \emph{Course of theoretical physics. {V}ol.
  6}, second ed., Pergamon Press, Oxford, 1987, Fluid mechanics, Translated
  from the third Russian edition by J. B. Sykes and W. H. Reid.

\bibitem{Liu-Xin2023}
Sicheng Liu and Zhouping Xin, \emph{Local well-posedness of the incompressible
  current-vortex sheet problems}, Adv. Math. \textbf{475} (2025), Paper No.
  110339, 90 pp.

\bibitem{Liu2024}
Xiao Liu, \emph{Instability and spectrum of the linearized two-phase fluids
  interface problem at shear flows}, Arch. Ration. Mech. Anal. \textbf{248}
  (2024), Paper No. 75, 47 pp.

\bibitem{LZ21arxiv}
Xiao Liu and Chongchun Zeng, \emph{Capillary gravity water waves linearized at
  monotone shear flows: eigenvalues and inviscid damping}, arXiv preprint
  (2021), arXiv:2110.12604.

\bibitem{LZ25CMP}
Xiao Liu and Chongchun Zeng, \emph{On the spectra of the gravity water waves
  linearized at monotone shear flows}, Comm. Math. Phys. \textbf{406} (2025),
  Paper No. 41, 54 pp.

\bibitem{Miles57JFM}
John~W. Miles, \emph{On the generation of surface waves by shear flows}, J.
  Fluid Mech. \textbf{3} (1957), 185--204.

\bibitem{Miles59JFM'}
John~W. Miles, \emph{On the generation of surface waves by shear flows. {III}.
  {K}elvin-{H}elmholtz instability}, J. Fluid Mech. \textbf{6} (1959),
  583--598. (1 plate).

\bibitem{Miles59JFM}
John~W. Miles, \emph{On the generation of surfaces waves by shear flows. {II}},
  J. Fluid Mech. \textbf{6} (1959), 568--582.

\bibitem{RR13JFM}
Michael Renardy and Yuriko Renardy, \emph{On the stability of inviscid parallel
  shear flows with a free surface}, J. Math. Fluid Mech. \textbf{15} (2013),
  129--137.

\bibitem{SZ11ARMA}
Jalal Shatah and Chongchun Zeng, \emph{Local well-posedness for fluid interface
  problems}, Arch. Ration. Mech. Anal. \textbf{199} (2011), 653--705.

\bibitem{Zil17JDE}
Christian Zillinger, \emph{On circular flows: linear stability and damping}, J.
  Differential Equations \textbf{263} (2017), 7856--7899.

\end{thebibliography}
	
	\end{document}